\theoremstyle{plain}
\newtheorem{theorem}{Theorem}
\newtheorem{corollary}[theorem]{Corollary}
\newtheorem{lemma}[theorem]{Lemma}
\theoremstyle{definition}
\newtheorem{definition}[theorem]{Definition}
\newtheorem{example}[theorem]{Example}
\theoremstyle{remark}
\newtheorem{remark}[theorem]{Remark}
\newtheorem{notation}[theorem]{Notation}
\newtheorem{acknowledgment}[theorem]{Acknowledgment}
\numberwithin{theorem}{section}
\newcommand{\card}[1]{\mbox{\rm{card}}(#1)}
\newcommand{\Soc}[1]{\mbox{\rm{Soc}}(#1)}
\newcommand{\im}[1]{\mbox{\rm{Im}}(#1)}
\newcommand{\Hom}[3]{\mbox{\rm{Hom}}_{#1}(#2,#3)}
\newcommand{\Ext}[4]{\mbox{\rm{Ext}}^{#1}_{#2}(#3,#4)}
\newcommand{\rmod}[1]{\mbox{\rm{Mod}--}{#1}}
\newcommand{\simp}[1]{\mbox{\rm{simp}--}{#1}}
\begin{document}

\title{The Dual Baer Criterion for non-perfect rings}
\author{\textsc{Jan Trlifaj}}
\dedicatory{To Roger and Sylvia Wiegand, in honor of their 150th birthday}
\address{Charles University, Faculty of Mathematics and Physics, Department of Algebra \\
Sokolovsk\'{a} 83, 186 75 Prague 8, Czech Republic}
\email{trlifaj@karlin.mff.cuni.cz}

\date{\today}
\subjclass[2010]{Primary: 16D40, 03E35. Secondary: 16E50, 16D60, 03E45, 18G05.}
\keywords{R-projective module, von Neumann regular ring, semiartinian module, Jensen's Diamond Principle.}
\thanks{Research supported by GA\v CR 17-23112S}
\begin{abstract} Baer's Criterion for Injectivity is a useful tool of the theory of modules. Its dual version (DBC) is known to hold for all right perfect rings, but its validity for the non-right perfect ones is a complex problem (first formulated by Faith in 1976 \cite{F}). Recently, it has turned out that there are two classes of non-right perfect rings: 1.\ those for which DBC fails in ZFC, and 2.\ those for which DBC is independent of ZFC. First examples of rings in the latter class were constructed in \cite{T4}; here, we show that this class contains all small semiartinian von Neumann regular rings with primitive factors artinian.    
\end{abstract}

\maketitle

\section{Introduction}

The celebrated Baer's Criterion for Injectivity \cite{B} enables a restriction to cyclic modules when testing for injectivity of a module over an arbitrary ring. It forms the basic step in the classification of injective modules over various right noetherian rings, the study of injective complexes of modules, etc. 

It is easy to see that the dual version of the Baer Criterion (called DBC for short), which enables restriction to cyclic modules when testing for projectivity of a module, works for all finitely generated modules over any ring \cite[16.14(2)]{AF}. Moreover, DBC holds for all modules over any right perfect ring by \cite{S} (or \cite{KV}). In view of these positive results, Faith \cite[p.175]{F} raised the question of characterizing the (non-right perfect) rings for which DBC holds (for all modules). 

Gradually, various classes of non-right perfect rings were shown to fail DBC, and it has even turned out to be consistent with ZFC that no non-right perfect ring satisfies DBC \cite{AIPY}. On the positive side, examples of particular non-right perfect rings for which it is consistent with ZFC that DBC holds have recently been constructed in \cite{T4}; for those particular rings, the validity of DBC is independent of ZFC. 
 
The recent results mentioned above motivate the following refinement of Faith's question: Find a boundary line between those non-right perfect rings for which DBC fails in ZFC, and those for which it is independent of ZFC. The former class of rings is a large one: it contains all commutative noetherian rings \cite[Theorem 1]{H1}, all semilocal right noetherian rings \cite[Proposition 2.11]{AIPY}, and all commutative domains \cite[Lemma 1]{T4}. Our goal here is to further develop the approach of \cite{T4} to study the latter class. 

Our main Theorem \ref{consistency} shows that the latter class contains a particular kind of transfinite extensions of simple artinian rings, the small semiartinian von Neumann regular rings with primitive factors artinian (cf.\ Definition \ref{smallR}). We also show (in ZFC) that for each cardinal $\kappa$, there exists a non-right perfect ring $R$ such that DBC holds for all $\leq \kappa$-generated modules (see Example \ref{kappa}).   

\medskip 
Let $R$ be a ring. A (right $R$-) module $M$ is called \emph{$R$-projective} provided that each homomorphism from $M$ into $R/I$ where $I$ is any right ideal of $R$, factorises through the canonical projection $\pi : R \to R/I$ \cite[p.184]{AF}. The \emph{Dual Baer Criterion (DBC)} says that a module $M$ is projective, if and only if $M$ is $R$-projective.   

A ring $R$ is \emph{von Neuman regular} provided that for each $r \in R$ there exists $s \in R$ such that $rsr = r$.

A ring $R$ is right \emph{semiartinian} if $\Soc M \neq 0$ for each non-zero module $M$. In this case, there are a non-limit ordinal $\tau$ and a strictly increasing chain of ideals of $R$, $\mathcal S = ( S_\alpha \mid \alpha \leq \tau )$, such that $S_0 = 0$, $S_{\alpha +1}/S_\alpha = \Soc{R/S_\alpha}$ for each $\alpha < \tau$, $S_\alpha = \bigcup_{\beta < \alpha} S_\beta$ for each limit ordinal $\alpha < \tau$, and $S_\tau = R$. The chain $\mathcal S$ is called the (right) \emph{socle sequence} of $R$, and $\tau$ is the (right) \emph{Loewy length} of $R$.  

It is well-known that if $R$ is right semiartinian of Loewy length $\tau$, then each right module $M$ is semiartinian, and has a socle sequence $\mathcal M = ( M_\alpha \mid \alpha \leq \eta )$ for some $\eta \leq \tau$. The ordinal $\eta$ is the \emph{Loewy length} of $M$.
The completely reducible module  $M_{\alpha +1}/M_\alpha = \Soc{M/M_\alpha}$ is called the \emph{$\alpha$th layer} of $M$ ($\alpha < \eta$).
Note that if $M$ is finitely generated (e.g., when $M = R$), then $\eta$ is a non-limit ordinal, and the last (= $(\eta -1)$th) layer of $M$ is finitely generated.   

Also, if $R$ is von Neumann regular, then $R$ is left semiartinian, iff $R$ is right semiartinian, and in this case, the left and right socle sequences of $R$ coincide. 

Let $R$ be a von Neumann regular ring. Then $R$ is said to have \emph{primitive factors artinian} in case $R/P$ is (right) artinian for each (right) primitive ideal $P$ of $R$. Again, this notion is left-right symmetric, and it is equivalent to all prime factors of $R$ being artinian, see \cite[Theorem 6.2]{G}. By \cite[Proposition 6.18]{G}, it is also equivalent to the injectivity of all all homogenous completely reducible modules, i.e., to $\sum$-injectivity of all simple modules. In particular, in this case, all layers of any semiartinian module $M$, with the possible exception of the last one, are infinitely generated; in fact, they even have infinitely many non-zero homogenous components.

For further properties of the notions defined above, we refer the reader to \cite{AF}, \cite{EM}, \cite{GT} and \cite{G}.  

\section{$R$-projective modules}\label{versus}

In order to study relations between $R$-projective and projective modules, it is convenient to recall the more general notions of an $N$-projective module, and a projectivity domain of a module, from \cite[\S 16]{AF}: 

\begin{definition}\label{Mproj} Let $R$ be a ring and $M \in \rmod R$. 
\begin{enumerate}
\item Let $N \in \rmod R$. Then $M$ is \emph{$N$-projective} provided that the functor $\Hom RM{-}$ is exact at each short exact sequence whose middle term is $N$.  
\item The set of all $N \in \rmod R$ such that $M$ is $N$-projective is called the \emph{projectivity domain} of $M$, and denoted by $\mathcal{P}r^{-1}(M)$. 
\end{enumerate}
\end{definition}

Clearly, $M$ is projective, iff $\mathcal{P}r^{-1}(M) = \rmod R$. The following lemma is well-known (see e.g.\ \cite[16.12 and 16.14(2)]{AF}):

\begin{lemma}\label{closureprops} Let $M \in \rmod R$. Then the class $\mathcal{P}r^{-1}(M)$ is closed under submodules, homomorphic images, and finite direct sums. 

In particular, each $R$-projective module is $F$-projective for any finitely generated module $F$. Hence each finitely generated $R$-projective module is projective. 
\end{lemma}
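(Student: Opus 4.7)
I would verify the three closure properties one at a time, and then derive the two consequences. Throughout, I will use the standard reformulation: $M$ is $N$-projective if and only if every homomorphism $M\to N/K$ lifts along the canonical projection $N\to N/K$, for every submodule $K\leq N$.

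\emph{Submodules.} Given $N\in\mathcal{P}r^{-1}(M)$ and $N'\leq N$, I take a submodule $K\leq N'$ and a map $f\dd M\to N'/K$. The key trick is to extend the scene to $N$: since $K\leq N$, the map $M\to N'/K\hookrightarrow N/K$ obtained by post-composing $f$ with the inclusion lifts, by $N$-projectivity, to some $h\dd M\to N$. The lift is forced to take values in $N'$ because for each $m\in M$ the element $h(m)$ agrees modulo $K$ with an element of $N'$, and $K\leq N'$. So $h$ is the desired lift of $f$.

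\emph{Homomorphic images.} If $\pi\dd N\twoheadrightarrow N''$ and $K\leq N''$, then pulling back gives $K' = \pi^{-1}(K)\leq N$ with $N/K'\cong N''/K$ canonically. A map $M\to N''/K$ therefore lifts through $N\to N''/K$ by $N$-projectivity, and the composition with $\pi$ produces a lift $M\to N''$.

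\emph{Finite direct sums.} This is the step that takes the most care and is the only real obstacle. For $N_1,N_2\in\mathcal{P}r^{-1}(M)$ and $K\leq N_1\oplus N_2$, I set $K_1 = K\cap N_1$ and $K_2 = \pi_2(K)$, where $\pi_2\dd N_1\oplus N_2\to N_2$ is the projection. One checks directly that there is a short exact sequence
\[
0\longrightarrow N_1/K_1\longrightarrow (N_1\oplus N_2)/K\longrightarrow N_2/K_2\longrightarrow 0.
\]
Given $f\dd M\to (N_1\oplus N_2)/K$, I first push $f$ to $M\to N_2/K_2$ and lift it, using $N_2$-projectivity, to $f_2\dd M\to N_2$. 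Then the difference $f - q\circ\iota_2\circ f_2$ (where $\iota_2,q$ denote the obvious inclusion and quotient) lands in the kernel $N_1/K_1$, and $N_1$-projectivity supplies a lift $g_1\dd M\to N_1$. Assembling $(g_1,f_2)\dd M\to N_1\oplus N_2$ gives the sought-after lift of $f$.

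\emph{Consequences.} Any finitely generated $F\in\rmod R$ is a homomorphic image of some $R^n$, and $R\in\mathcal{P}r^{-1}(M)$ by assumption, so the three closure properties immediately yield $F\in\mathcal{P}r^{-1}(M)$. Finally, if $M$ is itself finitely generated and $R$-projective, applying the previous observation with $F=M$ shows $M\in\mathcal{P}r^{-1}(M)$. Choosing a surjection $R^n\twoheadrightarrow M$ and noting that $R^n\in\mathcal{P}r^{-1}(M)$, the identity of $M$ lifts to $M\to R^n$; hence $M$ is a direct summand of $R^n$, and therefore projective.
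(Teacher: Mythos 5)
Your proof is correct; all three closure arguments (including the short exact sequence $0\to N_1/K_1\to (N_1\oplus N_2)/K\to N_2/K_2\to 0$ for the direct-sum step) check out, as do the two consequences. The paper itself gives no proof, citing \cite[16.12 and 16.14(2)]{AF} as well known, and your argument is precisely the standard one found there, so there is nothing to compare beyond noting the match.
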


For a right ideal $I$ of a ring $R$, we will denote by $\pi_I : R \to R/I$ the canonical projection. We will need the following observation:

\begin{lemma}\label{summands} Let $R$ be a ring and $I \subseteq J$ be right ideals such that the epimorphism $\rho : R/I \to R/J$ splits.  Let $M \in \rmod R$ be such that each homomorphism from $M$ to $R/I$ factorises through $\pi_I$. Then each homomorphism from $M$ to $R/J$ factorises through $\pi_J$.
\end{lemma}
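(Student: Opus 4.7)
The plan is to use the splitting of $\rho$ to transport a lifting problem for $\pi_J$ back to a lifting problem for $\pi_I$, where the hypothesis applies.

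First, I would fix a splitting $\sigma : R/J \to R/I$ of $\rho$, so that $\rho \circ \sigma = \mathrm{id}_{R/J}$. The basic commutative diagram to keep in mind is that $\rho \circ \pi_I = \pi_J$, simply because both maps send $r \in R$ to the coset $r + J$; this identity is what allows us to pass between factorisations through $\pi_I$ and through $\pi_J$.

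Now given an arbitrary homomorphism $f : M \to R/J$, I would form the composite $\sigma \circ f : M \to R/I$. By the standing hypothesis on $M$, this composite lifts through $\pi_I$: there exists $h : M \to R$ with $\pi_I \circ h = \sigma \circ f$. Applying $\rho$ on the left and using the two identities $\rho \circ \pi_I = \pi_J$ and $\rho \circ \sigma = \mathrm{id}_{R/J}$, one obtains $\pi_J \circ h = \rho \circ \pi_I \circ h = \rho \circ \sigma \circ f = f$, so $h$ is the desired factorisation of $f$ through $\pi_J$.

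There is no real obstacle here; the argument is a short diagram chase once the splitting $\sigma$ is in hand. The only point requiring any care is to record that $\rho \circ \pi_I = \pi_J$, which makes the conclusion immediate. Consequently the lemma will be applied later in the contexts where summands of cyclic modules $R/I$ are themselves cyclic modules $R/J$ (for instance, when $J$ is obtained from $I$ by adjoining an idempotent-generated right ideal), so that $R$-projectivity automatically descends to such cyclic factors.
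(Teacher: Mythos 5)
Your proof is correct and is essentially identical to the paper's: both fix a splitting $\mu$ (your $\sigma$) of $\rho$, lift $\mu f$ through $\pi_I$ by hypothesis, and then use $\rho\pi_I=\pi_J$ together with $\rho\mu=\mathrm{id}_{R/J}$ to conclude. No further comment is needed.
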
 
\begin{proof} Let $\mu \in \Hom R{R/J}{R/I}$ be such that $\rho \mu = \hbox{id}_{R/J}$. Let $f \in \Hom RM{R/J}$. By assumption, there exists $g \in \Hom RMR$ such that $\pi_I g = \mu f$. Then $\pi_J g = \rho \pi_I g = \rho \mu f = f$, whence $f$ factorises through $\pi_J$.
\end{proof}   

By Lemma \ref{closureprops}, the question of existence of $R$-projective modules that are not projective is of interest only for modules that are not finitely generated. Also, since $R$-projectivity and projectivity coincide for all right perfect rings by \cite{S} (see also \cite[Theorem 1]{KV}), that is, DBC holds for these rings, we can w.l.o.g.\ restrict our investigation to non-right perfect rings. 

For many non-perfect rings, DBC fails because there exist countably generated $R$-projective modules that are not projective:
  
\begin{lemma}\label{count} Let $R$ be a non-right perfect right noetherian ring.   
\begin{enumerate}
\item \cite[Lemma 2.10]{AIPY} \, Let $M \neq 0$ be a module possessing no maximal submodules. Then $M$ is $R$-projective, but not projective.
\item Assume $R$ is commutative and there is a regular element $r \in R$ such that $r$ is not invertible (i.e., the depth of $R$ is $\geq 1$). Let $p$ be a maximal ideal such that $r \in p$. Then $E(R/p)$ is a countably generated $R$-projective module which is not projective.
\end{enumerate}
\end{lemma}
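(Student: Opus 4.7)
Part (1) is cited directly from [AIPY, Lemma 2.10], so I focus the plan on part (2), which asserts that $E := E(R/p)$ is countably generated, not projective, and $R$-projective; I will verify each claim in turn.

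Countable generation follows from standard Matlis theory: since $p$ is a maximal ideal of the commutative noetherian ring $R$, every element of $E$ is annihilated by a power of $p$, so $E$ is the union of the ascending chain $L_n := \{ x \in E : p^n x = 0 \}$, and each $L_n$ is the injective hull of the simple $R/p^n$-module $R/p$ over the artinian local ring $R/p^n$, hence has finite length and is finitely generated over $R$. Non-projectivity is also quick: the socle $R/p$ of $E$ is killed by $r$, so multiplication by $r$ has nonzero kernel on $E$, whereas $r$ (being regular on $R$) acts injectively on every submodule of a free $R$-module. Since any projective module embeds in a free one, $E$ cannot be projective.

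The heart of the argument is $R$-projectivity, and my plan is to establish the stronger statement that every $f \in \Hom{R}{E}{R/I}$ is the zero map --- such $f$ then lifts trivially through $\pi_I$. Two conflicting features of $N := f(E)$ will do the work. First, $N$ is finitely generated (as a submodule of the noetherian module $R/I$), and since every element of $E$ is killed by some power of $p$, a single exponent $n$ can be chosen with $p^n N = 0$, and hence $r^n N = 0$. Second, because $E$ is injective and $r$ is regular on $R$, multiplication by $r$ is surjective on $E$ (obtained by applying $\Hom{R}{-}{E}$ to the injection $r \dd R \to R$); iterating, $N = r^n N \subseteq r^n (R/I)$. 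Thus $N \subseteq r^n(R/I) \cap (0 :_{R/I} r^n)$. Invoking the noetherian hypothesis on $R/I$, the ascending chain $(0 :_{R/I} r^k)$ stabilises, and for $n$ past the stabilisation index the relation $x = r^n y$ with $r^n x = 0$ forces $y \in (0 :_{R/I} r^{2n}) = (0 :_{R/I} r^n)$, giving $x = 0$. Hence the intersection vanishes and $f = 0$.

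The expected main obstacle is precisely this last step: one has to pit $r$-divisibility of the image (inherited from injectivity of $E$) against $r$-torsion of the image (inherited from the $p$-primary structure of $E$), and use the noetherian condition on $R/I$ to force these two properties into outright contradiction.
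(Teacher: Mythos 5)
Your proposal is correct, and for part (2) it takes a genuinely more direct route than the paper. The paper reduces (2) to (1): it shows that $E(R/p)$ has no maximal submodules, by noting that $E(R/p)$ is countably $R/p$-filtered, that consequently its only possible simple factor-module is $R/p$, and that divisibility of $E(R/p)$ would then force $R/p$ to be divisible, contradicting $(R/p)\cdot r = 0$; both $R$-projectivity and non-projectivity are then read off from part (1). You instead prove outright that $\Hom R{E(R/p)}{R/I} = 0$ for every ideal $I$, by pitting the $r$-divisibility of $E(R/p)$ (so the image $N$ satisfies $N = r^nN$) against the $p$-power torsion of the finitely generated image ($r^nN = 0$); note that at this point $N = r^nN = 0$ already follows, so your closing argument with the stabilising chain $(0:_{R/I}r^k)$ is correct but redundant. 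Your separate non-projectivity argument (a regular element acts injectively on submodules of free modules, but kills the socle of $E(R/p)$) likewise replaces the paper's appeal to the fact that nonzero projectives have maximal submodules. Both proofs ultimately rest on the same tension --- divisibility by $r$ versus $r$ annihilating a finitely generated quotient --- but yours is self-contained, bypasses the reduction to part (1), and in exchange uses the standard Matlis-theoretic structure of $E(R/p)$ (namely that it is $p$-power torsion with finite-length layers $L_n$), which also cleanly supplies the countable generation.
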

\begin{proof} (1) Since $M$ has no maximal submodules, it has no non-zero finitely generated homomorphic images. Since $R$ is right noetherian, all finitely generated submodules of cyclic modules are also finitely generated. Hence $\Hom RM{R/I} = 0$ for each right ideal $I$ of $R$, whence $M$ is $R$-projective. However, $M$ is not projective, as non-zero projective modules contain maximal submodules (see e.g. \cite[17.14]{AF}). 

(2) By part (1), it suffices to prove that $E(R/p)$ has no maximal submodules. Assume this is not the case. Note that $E(R/p)$ is (countably) $R/p$-filtered, i.e., $E(R/p)$ is the union of a continuous (countable) increasing chain of its submodules with consecutive factors isomorphic to $R/p$. The same holds for any non-zero factor-module $F$ of $E(R/p)$. Since $p$ is the only associated prime of $R/p$, the same is true of $F$, whence the only simple factor-module of $E(R/p)$ is $R/p$. As $E(R/p)$ is injective, and hence divisible, it follows that $R/p$ is a divisible module. But $(R/p).r = 0$, a contradiction. 
\end{proof}

\begin{remark}\label{rem0} A closer look at the proofs of Lemmas 1, 2, and the Theorem in \cite{H2}, shows that if $R$ is a commutative noetherian ring such that each countably generated module has a maximal submodule, then the Jacobson radical $J$ of $R$ is $T$-nilpotent and $R/J$ is von Neumann regular, whence $R$ is artinian. 

So a countably generated $R$-projective, but not projective, module exists over \emph{any} commutative noetherian ring which is not artinian. In Lemma \ref{countable}, we will see that this fails for commutative rings in general.   
\end{remark}

Next, we will see that for \emph{each} infinite cardinal $\kappa$, there exists a non-right perfect ring $R$ such that for \emph{each} $\leq \kappa$-generated module $M$, the projectivity of $M$ is equivalent to its $R$-projectivity. In other words, DBC holds for all $\leq \kappa$-generated modules:

\begin{example}\label{kappa} Let $K$ be a skew-field, $\kappa$ an infinite cardinal, and $R$ be the endomorphism ring of a $\kappa$-dimensional left vector space $V$ over $K$. Let $B$ be a left $K$-basis of $V$. Since $\kappa$ is infinite, $B$ is a disjoint union of its subsets $\{ B_\alpha \mid \alpha < \kappa \}$ such that $B_\alpha$ has cardinality $\kappa$ for all $\alpha < \kappa$. For each $\alpha < \kappa$, let $b_\alpha : B_\alpha \to B$ be a bijection, and $r_\alpha \in R$ be such that $r_\alpha \restriction B_\alpha = b_\alpha$ and $r_\alpha \restriction B_\beta = 0$ for all $\alpha \neq \beta < \kappa$. 

Since $b_\alpha$ is a bijection, $r_\alpha$ is left invertible in $R$ for each $\alpha < \kappa$. Since the kernel of $r_\alpha$ coincides with the left $K$-subspace of $V$ generated by $\bigcup_{\alpha \neq \beta < \kappa} B_\beta$, and its image is $V$, the set $\{ r_\alpha \mid \alpha < \kappa \}$ is a right $R$-independent subset of $R$. So the right ideal $I = \sum_{\alpha < \kappa} r_\alpha R$ in $R$ is a free module of rank $\kappa$.  

Assume that $M$ is an $R$-projective $\leq \kappa$-generated module. Then $I \in \mathcal{P}r^{-1}(M)$ by Lemma \ref{closureprops}, so $M$ is $R^{(\kappa )}$-projective, and hence projective. 
\end{example}

However, it is consistent with ZFC that there exists \emph{no} non-right perfect ring $R$ such that DBC holds for \emph{all} modules. Namely, as observed in \cite{AIPY}, \cite[Lemma 2.4]{T2} (or \cite{T1}) prove that the following holds in the extension of ZFC satisfying Shelah's Uniformization Principle (SUP): 

\begin{lemma}\label{sup} Assume SUP. Let $R$ be a non-right perfect ring. Then there exists a module $N$ of projective dimension $1$ such that $\Ext 1RN{I} = 0$ for each right ideal $I$ of $R$. Hence $N$ is $R$-projective, but not projective. In particular, DBC fails in $\rmod R$.
\end{lemma}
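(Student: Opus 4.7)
The plan is to combine Bass's characterization of non-perfect rings with a set-theoretic construction, under SUP, that kills $\Ext 1R{-}{M}$ for all ``small'' modules $M$. First, I would use that $R$ is not right perfect to invoke Bass's theorem: the Jacobson radical $J(R)$ is not right $T$-nilpotent, so there is a sequence $(a_n)_{n<\omega}$ in $J(R)$ with $a_n \cdots a_1 a_0 \neq 0$ for every $n$. From such a sequence one builds the standard Bass module $B$, namely the direct limit of the system $R \overset{\cdot a_0}{\longrightarrow} R \overset{\cdot a_1}{\longrightarrow} R \overset{\cdot a_2}{\longrightarrow} \cdots$ with maps given by left multiplication. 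This $B$ is countably generated and has projective dimension exactly $1$; it is the elementary building block of the construction.

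Next, setting $\kappa = |R| + \aleph_0$, I would invoke the construction of \cite[Lemma 2.4]{T2} (alternatively \cite{T1}) to produce, under SUP, a module $N$ of cardinality $\kappa^+$. Concretely, $N$ is assembled as the union of a smooth continuous chain $(N_\alpha : \alpha < \kappa^+)$ in which each successor quotient $N_{\alpha+1}/N_\alpha$ is either free or isomorphic to $B$, the $B$-quotients occurring along a non-reflecting stationary set $E \subseteq \{\alpha < \kappa^+ : \cf{\alpha} = \omega\}$. The combination of free and Bass-type layers forces $\pd N = 1$, and the role of SUP is to uniformize the cocycle obstructions along $E$, yielding $\Ext 1RN{M} = 0$ for every module $M$ with $|M| \le \kappa$. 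With $N$ in hand the rest is routine: every right ideal $I$ of $R$ satisfies $|I| \le |R| \le \kappa$, so $\Ext 1RN{I} = 0$, and applying $\Hom RN{-}$ to $0 \to I \to R \to R/I \to 0$ gives surjectivity of $\Hom RN{R} \to \Hom RN{R/I}$. Thus every homomorphism $N \to R/I$ factors through $\pi_I$, so $N$ is $R$-projective; since $\pd N = 1 > 0$ it is not projective, and DBC fails in $\rmod R$.

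The hard step is the middle one: showing that SUP along the non-reflecting stationary set $E$ truly suffices to kill every cocycle obstruction into a module of size $\le \kappa$, while simultaneously preserving both the Bass-type subquotients and the projective-dimension-one character of $N$. This is the technical heart of \cite{T2} and \cite{T1}, and requires the non-reflecting nature of $E$ (so that the filtration is smooth at limits outside $E$) together with a cardinal-counting argument that exploits $|M| \le \kappa < \kappa^+$ to bound the combinatorial complexity of the uniformization task.
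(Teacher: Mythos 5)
Your overall route is the same as the paper's: the paper offers no self-contained proof of this lemma, but simply cites \cite[Lemma 2.4]{T2} (or \cite{T1}) for the existence, under SUP, of a module $N$ with $\pd N = 1$ and $\Ext 1RNM = 0$ for all modules $M$ of cardinality at most $\kappa = |R| + \aleph_0$; your closing deduction (apply $\Hom RN{-}$ to $0 \to I \to R \to R/I \to 0$ and use $\Ext 1RNI = 0$ to get $R$-projectivity, while $\pd N = 1$ rules out projectivity) is exactly the intended argument and is correct.

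There is, however, one genuine error in your sketch of the cited construction. You begin by asserting that non-right-perfectness of $R$ forces $J(R)$ to fail T-nilpotency, and you extract the sequence $(a_n)$ from $J(R)$. This is false: by Bass's Theorem P, $R$ is right perfect iff $J(R)$ is left T-nilpotent \emph{and} $R/J(R)$ is semisimple, so a non-right-perfect ring may well have T-nilpotent (even zero) radical. This is not a peripheral case here --- the rings this paper studies are von Neumann regular, hence have $J(R)=0$, and $\mathbb{Z}$ is another counterexample --- so as written your first step produces nothing for precisely the rings of interest. The correct starting point is the other clause of Theorem P: $R$ is not right perfect iff there exist $a_n \in R$ (not necessarily in the radical) with a strictly decreasing chain of principal left ideals $Ra_0 \supsetneq Ra_1a_0 \supsetneq \cdots$; the associated Bass module $B = \varinjlim (R \to R \to \cdots)$ is then flat, non-projective, and of projective dimension $1$, and the rest of your construction (the $\kappa^+$-filtration with free and $B$-type successor quotients along a non-reflecting stationary set, uniformized via SUP) proceeds as you describe. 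With that repair the sketch accurately reflects the content of \cite[Lemma 2.4]{T2}.
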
  

Note that if ZFC is consistent, then so is ZFC + GCH + SUP, see \cite[\S2]{ES}.   

\medskip
Suprisingly, there do exist other extensions of ZFC, and non-right perfect rings $R$ with the property that $R$-projectivity and projectivity 
coincide for \emph{all} modules. In order to provide more details, we need to recall the relevant set-theoretic concepts:

Let $\kappa$ be a regular uncountable cardinal. A subset $C \subseteq \kappa$ is called a \emph{club} provided that $C$ is \emph{closed} in $\kappa$ (i.e., $\hbox{sup}(D) \in C$ for each subset $D \subseteq C$ such that $\hbox{sup}(D) < \kappa$) and $C$ is \emph{unbounded} (i.e., $\hbox{sup}(C) = \kappa$). Equivalently, there exists a strictly increasing continuous function $f : \kappa \to \kappa$ whose image is $C$. A subset $E \subseteq \kappa$ is \emph{stationary} provided that $E \cap C \neq \emptyset$ for each club $C \subseteq \kappa$.

Let $A$ be a set of cardinality $\leq \kappa$. An increasing continuous chain, $\{ A_\gamma \mid \gamma < \kappa \}$, consisting of subsets of $A$ of cardinality $< \kappa$ such that $A = \bigcup_{\gamma < \kappa} A_\gamma$, is called a \emph{$\kappa$-filtration} of the set $A$. Similarly \cite[IV.1.3.]{EM}, if $M$ is a $\leq \kappa$-generated module, then an increasing continuous chain, $( M_\gamma \mid \gamma < \kappa )$, consisting of $< \kappa$-generated submodules of $M$ such that $M = \bigcup_{\gamma < \kappa} M_\gamma$, is called a \emph{$\kappa$-filtration} of the module $M$.  

The key notion needed here is a variant of the notion of Jensen-functions, \cite{J}, \cite[\S 18.2]{GT}, defined as follows: 

Let $\kappa$ be a regular uncountable cardinal and $E$ be a stationary subset of $\kappa$. Let $A$ and $B$ be sets of cardinality $\leq \kappa$. Let $\{ A_\gamma \mid \gamma < \kappa \}$ be a $\kappa$-filtration of $A$. For each $\gamma < \kappa$, let $c_\gamma: A_\gamma \to B$ be a map. Then $( c_\gamma \mid \gamma < \kappa )$ are called \emph{Jensen-functions} provided that for each map $c : A \to B$, the set $E(c) = \{ \gamma \in E \mid c \restriction A_\gamma = c_\gamma \}$ is stationary in $\kappa$.

The existence of Jensen-functions is equivalent to the validity of the well known Jensen's Diamond Principle $\diamondsuit$, that is, to the validity of $\diamondsuit_{\kappa}(E)$ for each regular uncountable cardinal $\kappa$ and each stationary subset $E \subseteq \kappa$ (see e.g.\ \cite[\S 18.2]{GT} for more details):

\begin{lemma}\label{jensen} Assume $\diamondsuit$. Then Jensen-functions exist, that is, the following assertion holds true: 

For each regular uncountable cardinal $\kappa$, $E \subseteq \kappa$ a stationary subset of $\kappa$, and $A$ and $B$ sets of cardinality $\leq \kappa$ with $A$ equipped with a $\kappa$-filtration $\{ A_\gamma \mid \gamma < \kappa \}$, there exist Jensen-functions $( c_\gamma \mid \gamma < \kappa )$, i.e., $c_\gamma : A_\gamma \to B$ for each $\gamma < \kappa$, and for each map $c : A \to B$, the set $E(c) = \{ \gamma \in E \mid c \restriction A_\gamma = c_\gamma \}$ is stationary in $\kappa$. 
\end{lemma}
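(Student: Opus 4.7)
The plan is to derive the lemma from the subset formulation of $\diamondsuit_\kappa(E)$, namely the assertion that there exists a sequence $(S_\gamma \mid \gamma \in E)$ with $S_\gamma \subseteq \gamma$ such that $\{\gamma \in E \mid X \cap \gamma = S_\gamma\}$ is stationary for every $X \subseteq \kappa$. This is the standard form of $\diamondsuit_\kappa(E)$, directly available from $\diamondsuit$; the task therefore reduces to transferring information between subsets of $\kappa$ and maps $A \to B$ by a suitable coding.

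To set up the coding, I would first fix a bijection $e : A \to \lambda$, where $\lambda = |A| \leq \kappa$, chosen compatibly with the filtration in the sense that $e[A_\gamma] = \gamma$ for all $\gamma$ in a club $C_0 \subseteq \kappa$. Such an $e$ is produced by enumerating $A$ successively, taking unions at limit stages, and then passing to the club of fixed points of the continuous increasing function $\gamma \mapsto \sup(e[A_\gamma])$. Similarly fix a bijection $b : B \to \mu$ with $\mu = |B| \leq \kappa$, and a G\"odel-style pairing function $p : \kappa \times \kappa \to \kappa$ satisfying $p[\gamma \times \gamma] = \gamma$ for $\gamma$ in a club $C_1 \subseteq \kappa$ and $p(\alpha,\beta) \geq \max(\alpha,\beta)$. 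Set $C := C_0 \cap C_1$.

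With these in place, each $c : A \to B$ is encoded by
\[
X_c \; := \; \{\, p(e(a), b(c(a))) \mid a \in A \,\} \subseteq \kappa.
\]
The choices above guarantee that for each $\gamma \in C$ with $b[c[A_\gamma]] \subseteq \gamma$ (the latter condition cutting out a further club $C_c$ depending on $c$), one has $X_c \cap \gamma = \{\, p(e(a), b(c(a))) \mid a \in A_\gamma \,\}$, and this set uniquely determines $c \restriction A_\gamma$ via the inverses $p^{-1}$, $e^{-1}$, $b^{-1}$. Now apply $\diamondsuit_\kappa(E)$ to obtain $(S_\gamma \mid \gamma \in E)$, and for each $\gamma \in E$ define $c_\gamma : A_\gamma \to B$ to be the unique map whose code under the analogous formula equals $S_\gamma$ (if such a map exists), and the constant zero map otherwise.

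Given any $c : A \to B$, the set $T = \{\gamma \in E \mid S_\gamma = X_c \cap \gamma\}$ is stationary by $\diamondsuit_\kappa(E)$, hence so is $T \cap C \cap C_c$; on this intersection, $S_\gamma = X_c \cap \gamma$ codes precisely $c \restriction A_\gamma$, so $c_\gamma = c \restriction A_\gamma$, yielding $T \cap C \cap C_c \subseteq E(c)$ and hence $E(c)$ is stationary. The main (essentially routine) obstacle is the bookkeeping required to secure the key identity $X_c \cap \gamma = \{\, p(e(a), b(c(a))) \mid a \in A_\gamma \,\}$ on a club depending only on $c$: one has to pick the enumeration $e$ compatibly with the $\kappa$-filtration of $A$, and the pairing $p$ compatibly with the ordinal structure of $\kappa$, which uses the regularity of $\kappa$ together with $|A_\gamma| < \kappa$. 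Once this compatibility is established, $\diamondsuit_\kappa(E)$ does the remaining work directly.
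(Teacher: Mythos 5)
Your argument is correct in substance, and it is the standard derivation of the ``Jensen-functions'' form of $\diamondsuit$ from the subset form $\diamondsuit_\kappa(E)$ via a pairing-function coding. The paper itself gives no proof of this lemma --- it simply records the equivalence with $\diamondsuit$ and defers to \cite[\S 18.2]{GT} --- so there is nothing to compare against except the literature, where essentially your coding appears.

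Two small points of bookkeeping deserve care. First, the club you actually need is $\{\gamma : e[A_\gamma]=\gamma\}$, and taking fixed points of $\gamma \mapsto \sup(e[A_\gamma])$ only yields $e[A_\gamma]\subseteq\gamma$ with $e[A_\gamma]$ cofinal in $\gamma$; to get equality you should either run the standard interleaving argument producing $\gamma$ with both $e[A_\gamma]\subseteq\gamma$ and $e^{-1}[\gamma]\subseteq A_\gamma$ (this is where regularity of $\kappa$ and $|A_\gamma|<\kappa$ enter), or build $e$ by enumerating $A$ along the filtration so that each $e[A_\gamma]$ is an ordinal. Second, if $|A|<\kappa$ the identity $e[A_\gamma]=\gamma$ cannot hold on a club; but in that degenerate case $A_\gamma=A$ on a tail of $\kappa$ (again by regularity), $X_c$ is bounded, and $X_c\cap\gamma=X_c$ already determines all of $c$ there, so the decoding goes through. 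Neither issue affects the correctness of the approach; both are exactly the ``routine obstacle'' you flag at the end.
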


Jensen \cite{J} proved that $\diamondsuit$ holds under the assumption of G\" odel's Axiom of Constructibility (V = L). Note also that $\diamondsuit$ implies GCH (that is, $2^\lambda = \lambda^+$ for each infinite cardinal $\lambda$), see e.g.\ \cite[18.14]{GT}. Hence if ZFC is consistent, then so is ZFC + GCH + $\diamondsuit$. 

By \cite[Theorem 3.3]{T4}, the following holds in the extension of ZFC satisfying $\diamondsuit$: 

\begin{lemma}\label{jf} Assume $\diamondsuit$. Let $K$ be a field of cardinality $\leq 2^{\aleph_0}$ and $R = ECS(K)$ be the subring of $K^\omega$ consisting of all eventually constant sequences. Then each $R$-projective module is projective, i.e., DBC holds in $\rmod R$. 
\end{lemma}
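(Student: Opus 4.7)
The plan is to argue by contradiction: assume $\diamondsuit$ holds, $|K|\leq 2^{\aleph_0}$, $M$ is $R$-projective, and $M$ is not projective. Since $\diamondsuit$ entails $2^{\aleph_0}=\aleph_1$, we have $|R|\leq\aleph_1$ and hence $|R/I|\leq\aleph_1$ for every right ideal $I\leq R$. The first task is to reduce to $|M|=\aleph_1$: using that $\mathcal{P}r^{-1}(M)$ is closed under submodules and homomorphic images (Lemma \ref{closureprops}) together with a standard reflection argument down a filtration, we may replace $M$ by a subquotient of cardinality $\aleph_1$ that is still $R$-projective and still non-projective. Fix such $M$ together with an $\aleph_1$-filtration $(M_\gamma\mid \gamma<\omega_1)$ by countably generated submodules, with $M_0=0$.

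Next I would exploit the specific structure of $R=ECS(K)$. The ring $R$ is semiartinian of Loewy length $2$, with socle $S=\bigoplus_{n<\omega} K e_n$ a countable direct sum of pairwise non-isomorphic simples $K_n$, and with $R/S\cong K_\infty$ a further simple module; among these simples only $K_\infty$ fails to be projective. A first sub-step is to verify directly that every countably generated $R$-projective module is already projective, using this Loewy decomposition. Combined with the non-projectivity of $M$, this forces the obstruction to appear at the uncountable level, and by a standard Eklof-style filtration analysis one shows that an appropriate ``singular'' set
\[ E \; = \; \{ \gamma < \omega_1 \mid \text{the partial obstruction through } M_\gamma \text{ does not extend projectively past } \gamma \} \]
is stationary in $\omega_1$.

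The main step is the $\diamondsuit$-guided construction. Since there are at most $\aleph_1$ right ideals of $R$, and at most $\aleph_1$ homomorphisms $M_\gamma\to R$ for each countable $M_\gamma$, a single application of $\diamondsuit_{\aleph_1}(E)$ supplies Jensen-functions predicting, at each $\gamma\in E$, a pair $(I_\gamma,g_\gamma)$ with $I_\gamma\leq R$ and $g_\gamma:M_\gamma\to R$. By transfinite recursion along the filtration I build a single right ideal $I\leq R$ together with a coherent sequence $f_\gamma:M_\gamma\to R/I$ assembling to a global $f:M\to R/I$, arranging that whenever $\gamma\in E$ satisfies $I_\gamma=I$ and $\pi_I g_\gamma=f_\gamma$, the chosen extension $f_{\gamma+1}$ admits no extension of $g_\gamma$ to $M_{\gamma+1}$ as a lifting through $\pi_I$. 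If a global $g:M\to R$ with $\pi_I g=f$ existed, the Jensen property would produce $\gamma\in E$ with $g|_{M_\gamma}=g_\gamma$ and $I_\gamma=I$, contradicting the diagonalization; hence $f$ witnesses the failure of $R$-projectivity of $M$, the desired contradiction.

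The hard part is this simultaneous construction of $I$ and $f$: the ideal $I$ must be chosen so that at stationarily many $\gamma\in E$ the quotient $R/I$ still carries enough ``fresh'' simple components to encode a diagonal obstruction against the guessed $g_\gamma$, while at the same time the $f_\gamma$'s must remain coherent under the filtration and assemble into a well-defined $f$ on all of $M$. This balance is precisely where the countably infinite supply of pairwise non-isomorphic simples $K_n$ in the socle of $R=ECS(K)$ is used essentially, and why the analogous construction cannot succeed for the commutative noetherian rings of Lemma \ref{count}(2), for which DBC provably fails in ZFC.
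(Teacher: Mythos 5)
Your overall shape --- filter $M$, isolate a stationary set of ``bad'' indices, use $\diamondsuit$ to guess potential liftings, and diagonalize against the guesses --- is indeed the skeleton of the paper's argument (the proof of Theorem \ref{consistency}, of which Lemma \ref{jf} is the special case $R=ECS(K)$). However, two of your steps contain genuine gaps. First, the reduction to $\card{M}=\aleph_1$ does not work as stated: Lemma \ref{closureprops} concerns the \emph{projectivity domain} $\mathcal{P}r^{-1}(M)$, i.e.\ the class of modules $N$ relative to which $M$ is $N$-projective, being closed under submodules and quotients; it says nothing about subquotients of $M$ inheriting $R$-projectivity, and non-projectivity of $M$ does not reflect to an $\aleph_1$-generated subquotient by any standard argument (this is exactly the ``almost free, non-free'' phenomenon: a non-projective module can have all of its small submodules projective). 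The paper instead runs an induction on the minimal number of generators $\kappa$ of $M$: the countable case is Lemma \ref{countable} (your sub-step, which is correct), singular $\kappa$ is handled by closure of weak $R$-projectivity under submodules together with singular compactness \cite[Corollary 3.11]{T2}, and each regular uncountable $\kappa$ gets its own $\diamondsuit_\kappa(E)$ argument. You invoke only $\diamondsuit_{\aleph_1}$, which cannot cover modules needing more generators.

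Second, your diagonalization builds the right ideal $I$ and the map $f\colon M\to R/I$ simultaneously, and you correctly flag the resulting coherence/extension tension as ``the hard part'' --- but as set up it cannot be resolved: at the non-guessing successor stages and at limit stages you must extend $f_\gamma$ over $M_{\gamma+1}$ (resp.\ take unions), and $R/I$ is not injective, so the extension need not exist; moreover the target cannot change during the recursion. The paper's fix is to fix the target in advance as an \emph{injective} module determined by the layer structure: for $ECS(K)$ one takes $I=\Soc R$, so the target is the finitely generated completely reducible --- hence injective, by primitive factors artinian --- module $R/\Soc R$, and $R$-projectivity of $M$ is equivalent to liftability of every map $M\to R/\Soc R$ through the projection $R\to R/\Soc R$ (Theorem \ref{finlen}, Corollary \ref{two}); in the general small case the target is the injective product $N$ of Notation \ref{N2} and Lemma \ref{weakproj}. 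With an injective fixed target, the ``otherwise'' step of the recursion is automatic, the Jensen functions guess maps into the fixed module $B$, and the diagonal summand $f_{\gamma+1}\rho_{\gamma+1}$ is supplied by the failure of (weak) $R$-projectivity of $M_{\gamma+1}/M_\gamma$ for $\gamma\in E$. Your intuition about the countably many pairwise non-isomorphic socle simples is pointing in the right direction, but the place where the structure of $ECS(K)$ actually enters is the injectivity of $R/\Soc R$ and the fact that the unique non-projective simple sits in the top layer.
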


Hence, by Lemmas \ref{sup} and \ref{jf}, the question of whether DBC holds for the ring $R = ECS(K)$ is independent of ZFC.    

\medskip
Our goal here is to further develop the approach of \cite{T4} and study $R$-projectivity, its generalizations, and DBC, in the much larger class of semiartinian von Neumann regular rings with primitive factors artinian.

\section{Semiartinian von Neumann regular rings with primitive factors artinian}\label{Loewy}
 
We start with recalling the following characterization from \cite[Theorem 2.1 and Proposition 2.6]{RTZ}. It provides a more concrete picture of the rings studied here as kind of transfinite extensions of simple artinian rings:

\begin{lemma}\label{semiart} Let $R$ be a right semiartinian ring and $\mathcal S = (S_\alpha \mid \alpha \leq \sigma + 1)$ be the right socle sequence of $R$. Then the following conditions are equivalent:
\begin{enumerate}
\item $R$ is von Neumann regular and has primitive factors artinian,
\item for each $\alpha \leq \sigma$ there are a cardinal
$\lambda_\alpha$, positive integers $n_{\alpha\beta}$ ($\beta <
\lambda_\alpha$) and skew-fields $K_{\alpha\beta}$ ($\beta <
\lambda_\alpha$) such that $S_{\alpha + 1}/S_\alpha \cong
\bigoplus_{\beta < \lambda_\alpha} M_{n_{\alpha\beta}}(K_{\alpha\beta})$,
as rings without unit. The pre-image of $M_{n_{\alpha\beta}}(K_{\alpha\beta})$ in this isomorphism
coincides with the $\beta$th homogenous component of $\Soc{R/S_\alpha}$,
and it is finitely generated as right $R/S_\alpha$-module
for all $\beta < \lambda_\alpha$.
Moreover, $\lambda_\alpha$ is infinite iff $\alpha < \sigma$.
\end{enumerate}
In this case, $R$ is also left semiartinian, and $\mathcal S$ is the left socle sequence of $R$. 

Moreover, if $P_{\alpha\beta}$ denotes a representative of all simple modules in the $\beta$th homogenous component of $\Soc{R/S_\alpha} = S_{\alpha +1}/S_\alpha$, then $\simp R = \{ P_{\alpha \beta} \mid \alpha \leq \sigma, \beta < \lambda_\alpha \}$ is a representative set of all simple modules. The modules $P_{\alpha\beta}$ ($\beta < \lambda_\alpha$) are called the \emph{$\alpha$th layer simple modules} of $R$. All simple modules are $\sum$-injective, and $\{ P_{0\beta} \mid \beta < \lambda_0 \}$ is a representative set of all projective simple modules.      
\end{lemma}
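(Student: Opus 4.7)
The plan is to establish $(1) \Leftrightarrow (2)$ by analyzing the socle of each quotient $R/S_\alpha$ through $\sum$-injectivity, and then derive the ``moreover'' clauses from the two-sided character of socles in a VNR ring with primitive factors artinian.

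For $(1) \Rightarrow (2)$, I would fix $\alpha \le \sigma$ and use that both von Neumann regularity and the primitive-factors-artinian property descend to $R/S_\alpha$, whose socle is $S_{\alpha+1}/S_\alpha$; decompose it as $\bigoplus_{\beta<\lambda_\alpha} H_\beta$ into homogeneous components $H_\beta \cong P_{\alpha\beta}^{(\kappa_\beta)}$. The crux is that $\kappa_\beta < \omega$: since simples are $\sum$-injective by hypothesis, $H_\beta$ is injective, hence a direct summand of $R/S_\alpha$, hence cyclic, hence (being semisimple) finitely generated. Writing $n_{\alpha\beta} = \kappa_\beta$, the ideal $H_\beta$ carries the summand idempotent as unit and has $P_{\alpha\beta}$ as its unique simple right module of finite length, so Wedderburn yields $H_\beta \cong M_{n_{\alpha\beta}}(K_{\alpha\beta})$ with $K_{\alpha\beta} = \End_R(P_{\alpha\beta})$. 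For the dichotomy on $\lambda_\alpha$: when $\alpha < \sigma$, finiteness of $\lambda_\alpha$ would make $S_{\alpha+1}/S_\alpha$ a finite direct sum of injectives, hence injective, hence a direct summand of $R/S_\alpha$; the complement would have zero socle and so, by semiartinianness, vanish, forcing $R/S_\alpha$ to be semisimple against $\alpha < \sigma$. When $\alpha = \sigma$, the last layer is cyclic as $R$ itself is, so $\lambda_\sigma$ is finite.

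For $(2) \Rightarrow (1)$, I would verify $rsr = r$ for every $r \in R$ by transfinite induction on the Loewy length, exploiting that each layer, as a direct sum of matrix rings over skew-fields, is a regular ring without unit whose constituent components appear as direct summands in $R/S_\alpha$; successor steps lift quasi-inverses along the idempotents provided by each $H_\beta$, and limit steps use continuity of $\mathcal S$. For primitive factors artinian: given any simple $T$, let $\gamma + 1$ be the least $\alpha$ with $S_\alpha \not\subseteq \Ann T$ (a successor, by continuity of $\mathcal S$); then $T$ is a simple $R/S_\gamma$-module and, since every such module is injective over $R/S_\gamma$ (using $\sum$-injectivity over $R$), it splits off $R/S_\gamma$ and so embeds in $\Soc{R/S_\gamma}$, forcing $T \cong P_{\gamma\beta}$ for some $\beta$; consequently $R/\Ann T \cong M_{n_{\gamma\beta}}(K_{\gamma\beta})$ is artinian.

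For the ``moreover'' clauses: each $H_\beta$ is naturally two-sided (it is a unital matrix subring inside $R/S_\alpha$), so each $S_\alpha$ is two-sided; re-running the argument on the left recovers the same chain, proving left semiartinianness with coinciding socle sequences. The listing $\simp R = \{P_{\alpha\beta}\}$ follows from the embedding argument of the previous paragraph. Finally, a simple $T$ is projective iff it is a direct summand of $R$ iff $T \subseteq \Soc R = S_1$, pinning the projective simples down to $\{P_{0\beta} \mid \beta < \lambda_0\}$. I expect the main obstacle to be the transfinite induction proving $(2) \Rightarrow (1)$: since von Neumann regularity is not preserved under extensions in general, the induction cannot be purely formal and must genuinely exploit the direct-summand (equivalently, $\sum$-injective) character of each layer to construct regular inverses across both successor and limit steps.
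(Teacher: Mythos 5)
First, a point of context: the paper does not prove this lemma at all --- it is recalled verbatim from \cite[Theorem 2.1 and Proposition 2.6]{RTZ}, so there is no in-text argument to compare yours against. Judged on its own merits, your architecture (analyse $\Soc{R/S_\alpha}$ blockwise, apply Wedderburn to each homogeneous component, climb the socle sequence by transfinite induction) is the natural one, and your $(1)\Rightarrow(2)$ is essentially correct. One reassurance: the step you flag as the main obstacle is in fact the routine one. Von Neumann regularity \emph{is} closed under extensions of two-sided ideals (if $I$ and $R/I$ are regular, so is $R$; see \cite[Lemma 1.3]{G}), and a union of a chain of regular ideals is regular, so the induction establishing regularity in $(2)\Rightarrow(1)$ is essentially formal once each layer is known to be a non-unital regular ring, which $\bigoplus_{\beta}M_{n_{\alpha\beta}}(K_{\alpha\beta})$ certainly is.

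The genuine gap is in your derivation of ``primitive factors artinian'' from $(2)$: you embed a simple module $T$ into $\Soc{R/S_\gamma}$ ``using $\sum$-injectivity over $R$.'' For regular rings, $\sum$-injectivity of all simple modules is \emph{equivalent} to primitive factors artinian --- this is exactly the equivalence from \cite[Proposition 6.18]{G} quoted in the Introduction --- so at that point you are assuming a form of the conclusion you are proving. The repair is to avoid injectivity entirely. By $(2)$, each homogeneous component $H_\beta$ of $\Soc{\bar R}$, $\bar R = R/S_\gamma$, is a two-sided ideal possessing an identity element $e$; such an $e$ is automatically a central idempotent of $\bar R$ (for $r\in\bar R$ one has $er,re\in H_\beta$, whence $ere=er$ and $ere=re$), so $\bar R = e\bar R \times (1-e)\bar R$ as rings. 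A simple $T$ with $TS_{\gamma+1}\neq 0 = TS_\gamma$ then satisfies $TH_\beta\neq 0$ for some $\beta$, hence $Te=T$, $\Ann{T}/S_\gamma=(1-e)\bar R$, and $R/\Ann{T}\cong M_{n_{\gamma\beta}}(K_{\gamma\beta})$ is artinian; the same decomposition yields the classification of $\simp R$ without circularity. Finally, ``re-running the argument on the left'' is under-justified: two-sidedness of the $S_\alpha$ does not by itself identify them with the left socle sequence; you need that the left and right socles of a semiprime (in particular, regular) ring coincide, applied to each factor $R/S_\alpha$.
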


For example, the ring $R = ECS(K)$ from Lemma \ref{jf} fits the setting of Lemma \ref{semiart} for $\sigma = 1$, $\lambda_0 = \aleph_0$, $\lambda_1 = 1$, and $n_{0\beta} = n_{10} = 1$, $K_{0\beta} = K_{10} = K$ for all $\beta < \lambda_0$.

\medskip
We now fix our notation for the rest of this paper:

\begin{notation}\label{N1}
$R$ will denote a semiartinian von Neumann regular ring with primitive factors artinian, and $\mathcal S = (S_\alpha \mid \alpha \leq \sigma + 1)$ will be its socle sequence. 

For each $\alpha \leq \sigma$, the cardinal $\lambda_\alpha$ from Lemma \ref{semiart} equals the number of homogenous components of the $\alpha$th layer $S_{\alpha + 1}/S_\alpha$ of $R$, so it is an invariant of $R$. Similarly, for each $\beta < \lambda_\alpha$, $n_{\alpha \beta}$ is the dimension of the $\beta$th homogenous component of this layer, and $K_{\alpha \beta}$ is the endomorphism ring of $P_{\alpha, \beta}$. So the $n_{\alpha \beta}$ ($\beta < \lambda_\alpha$) and $K_{\alpha \beta}$ ($\beta < \lambda_\alpha$) are also invariants of $R$, unique up to an ordering of the homogenous components of $S_{\alpha + 1}/S_\alpha$ ($\alpha \leq \sigma$).
\end{notation}       

\begin{corollary}\label{factors} Let $R$ be a semiartinian von Neumann regular ring with primitive factors artinian and $\mathcal S = (S_\alpha \mid \alpha \leq \sigma + 1)$ be its socle sequence. Then for each $\alpha \leq \sigma$, $\bar R = R/S_\alpha$ is a semiartinian von Neumann regular ring with primitive factors artinian, and $\bar{\mathcal S} = (S_\beta/S_\alpha \mid \alpha \leq \beta \leq \sigma + 1)$ is the socle sequence of $\bar R$. Moreover, $\simp {\bar R} = \{ P_{\gamma \beta} \mid \alpha \leq \gamma \leq \sigma, \beta < \lambda_\gamma )$, and $\{ P_{\alpha \beta} \mid \beta < \lambda_{\alpha} \}$ is a representative set of all projective simple $\bar R$-modules.
\end{corollary}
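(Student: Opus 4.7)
The plan is to verify the four claims of the corollary in order, each time reducing to Lemma \ref{semiart} (applied either to $R$ or to $\bar R$) together with elementary quotient arguments. First, $\bar R$ is von Neumann regular by the standard lifting argument: given $\bar r \in \bar R$, choose a lift $r \in R$ and $s \in R$ with $rsr = r$; then $\bar r\, \bar s\, \bar r = \bar r$. For the primitive factors artinian condition, observe that the right primitive ideals of $\bar R$ correspond bijectively, via $Q \mapsto Q/S_\alpha$, to the right primitive ideals $Q$ of $R$ containing $S_\alpha$, and $\bar R/(Q/S_\alpha) \cong R/Q$ is artinian by hypothesis on $R$.

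Next, I would verify that $\bar{\mathcal S} = (S_\beta/S_\alpha \mid \alpha \leq \beta \leq \sigma + 1)$ is the socle sequence of $\bar R$. The starting term is $0$ and continuity at limit ordinals is immediate from the corresponding property of $\mathcal S$. For a successor step the third isomorphism theorem gives
\[
(S_{\beta+1}/S_\alpha)/(S_\beta/S_\alpha) \;\cong\; S_{\beta+1}/S_\beta \;=\; \Soc{R/S_\beta} \;=\; \Soc{\bar R/(S_\beta/S_\alpha)},
\]
and since $S_{\sigma+1} = R$ the chain exhausts $\bar R$. This simultaneously shows that $\bar R$ is right semiartinian and identifies $\bar{\mathcal S}$ as its socle sequence.

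Finally, the simple $\bar R$-modules are exactly the simple $R$-modules annihilated by $S_\alpha$, which by the classification in Lemma \ref{semiart} form the set $\{ P_{\gamma \beta} \mid \alpha \leq \gamma \leq \sigma,\ \beta < \lambda_\gamma \}$. Having now checked that $\bar R$ itself satisfies the hypotheses of Lemma \ref{semiart}, I would apply that lemma to $\bar R$: a representative set of its projective simple modules consists of the simples in its zeroth layer, which is $S_{\alpha+1}/S_\alpha$, and these are represented by $P_{\alpha\beta}$ for $\beta < \lambda_\alpha$. No step here presents a real obstacle; the statement merely records that the class of semiartinian von Neumann regular rings with primitive factors artinian is closed under quotients by ideals appearing in the socle sequence, with the socle sequence, the simples, and the projective simples truncated accordingly.
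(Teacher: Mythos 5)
Your proposal is correct and follows the route the paper intends: the corollary is stated without a written proof precisely because everything reduces to Lemma \ref{semiart} plus the standard quotient arguments you spell out (lifting regularity, the bijection on primitive ideals over $S_\alpha$, the third isomorphism theorem for the socle chain, and identifying $\simp{\bar R}$ with the simples killed by $S_\alpha$). The only point worth a word more is why no $P_{\gamma\beta}$ with $\gamma<\alpha$ survives to $\simp{\bar R}$ --- it is not annihilated by $S_\alpha$ because, by the matrix-ring description of the $\gamma$th layer in Lemma \ref{semiart}, $P_{\gamma\beta}\cdot S_{\gamma+1}=P_{\gamma\beta}\neq 0$ --- but this is implicit in the classification you invoke.
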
 

\medskip
Since we are interested in non-perfect rings, we will further assume that $R$ is not completely reducible, i.e., $\sigma \geq 1$. There is a handy test for $R$-projectivity in the case when the Loewy length of $R$ is finite:

\begin{theorem}\label{finlen} Assume $\sigma \geq 1$ is finite. For each $0 < \alpha \leq \sigma$, let $\pi_\alpha : S_{\alpha+1} \to S_{\alpha +1}/S_\alpha$ be the canonical projection. Then the following conditions are equivalent for a module $M$:
\begin{enumerate}
\item $M$ is $R$-projective,
\item For each $0 < \alpha \leq \sigma$ and each $\varphi \in \Hom RM{S_{\alpha +1}/S_\alpha}$, there exists $\psi \in \Hom RM{S_{\alpha +1}}$ such that $\varphi = \pi_\alpha \psi$. 
\end{enumerate}
\end{theorem}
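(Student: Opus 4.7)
The plan is to prove the two implications separately. The direction (1) $\Rightarrow$ (2) is a direct consequence of the definition of $R$-projectivity applied to the right ideal $S_\alpha$, whereas (2) $\Rightarrow$ (1) will proceed by a finite descending induction on $\alpha$ along the filtration of $R/I$ induced by the socle sequence of $R$.

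For (1) $\Rightarrow$ (2), I would take $\varphi \in \Hom RM{S_{\alpha+1}/S_\alpha}$ and view it, via the inclusion $S_{\alpha+1}/S_\alpha \hookrightarrow R/S_\alpha$, as a homomorphism $\tilde\varphi : M \to R/S_\alpha$. By $R$-projectivity there is $g \in \Hom RMR$ with $\pi_{S_\alpha} g = \tilde\varphi$. Since the image of $\pi_{S_\alpha} g$ lies in $S_{\alpha+1}/S_\alpha$, every value $g(m)$ belongs to $S_{\alpha+1}$, so $g$ restricts to the desired $\psi \in \Hom RM{S_{\alpha+1}}$ satisfying $\pi_\alpha \psi = \varphi$.

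For (2) $\Rightarrow$ (1), fix a right ideal $I$ and $f \in \Hom RM{R/I}$. Set $T_\alpha := I + S_\alpha$, so $T_0 = I \subseteq T_1 \subseteq \dots \subseteq T_{\sigma+1} = R$, and let $f_\alpha : M \to R/T_\alpha$ be the composition of $f$ with the canonical projection $R/I \to R/T_\alpha$. I will construct, by descending induction from $\alpha = \sigma+1$ down to $\alpha = 0$, homomorphisms $g_\alpha \in \Hom RMR$ with $\pi_{T_\alpha} g_\alpha = f_\alpha$; the base case is $g_{\sigma+1} = 0$ (since $T_{\sigma+1} = R$ and hence $f_{\sigma+1} = 0$), and the output $g := g_0$ will be the required factorisation of $f = f_0$ through $\pi_I = \pi_{T_0}$. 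At the inductive step, the difference $h := f_\alpha - \pi_{T_\alpha} g_{\alpha+1}$ has image in the kernel of the projection $R/T_\alpha \twoheadrightarrow R/T_{\alpha+1}$, that is, in $T_{\alpha+1}/T_\alpha \cong S_{\alpha+1}/(S_\alpha + (I \cap S_{\alpha+1}))$, a quotient of the semisimple layer $S_{\alpha+1}/S_\alpha$. Since that quotient is a direct summand of $S_{\alpha+1}/S_\alpha$, composing $h$ with a section and then invoking (2), exactly in the style of the proof of Lemma \ref{summands}, produces $k \in \Hom RM{S_{\alpha+1}}$ lifting $h$, whence $g_\alpha := g_{\alpha+1} + k$ (viewed with codomain $R$) satisfies $\pi_{T_\alpha} g_\alpha = f_\alpha$.

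The main obstacle is precisely this inductive step, where $h$ does not land in $S_{\alpha+1}/S_\alpha$ itself but only in a quotient of it; semisimplicity of the layers is what provides the section needed to reduce the problem back to (2), and the finiteness of $\sigma$ is essential to ensure the induction terminates after $\sigma+1$ steps, with no limit ordinal to handle. For $\alpha = 0$ the layer $S_1 = S_1/S_0$ is already semisimple, so the required lift of $h$ is immediate without any appeal to (2) --- this is why (2) need not be postulated for $\alpha = 0$ in the statement.
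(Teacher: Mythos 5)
Your proof is correct and follows essentially the same strategy as the paper's: a downward induction over the finite socle filtration, using complete reducibility of the layers $S_{\alpha+1}/S_\alpha$ to split the relevant quotient maps and reduce each inductive step to condition (2). The only cosmetic difference is that you fix the ideal $I$ and induct along the chain $T_\alpha = I + S_\alpha$, whereas the paper inducts on the invariant $n(J)$ (the largest $\alpha$ with $S_\alpha \subseteq J$) over all right ideals $J$; the content of the inductive step is identical.
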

\begin{proof} (1) implies (2) by Lemma \ref{closureprops}.

Assume (2). For a right ideal $J \neq R$, denote by $n(J)$ the greatest $\alpha \leq \sigma$ such that $S_\alpha \subseteq J$. By downward induction on $\alpha \leq \sigma$, we will prove that for each $J$ with $n(J) = \alpha$, each $f \in \Hom RM{R/J}$ factorises through the canonical projection $\pi_J \in \Hom RR{R/J}$. For $\alpha = \sigma$, this follows from (2) using Lemma \ref{summands} for $I = S_\sigma$ (since $S_{\sigma +1} = R$, and $R/S_\sigma$ is completely reducible). 

For the inductive step, consider a right ideal $J$ with $n(J) = \alpha < \sigma$ and let $f \in \Hom RM{R/J}$. Denote by $\rho : R/J \to R/(S_{\beta +1}+J)$ the canonical projection. Since $n(S_{\alpha +1} + J) \geq \alpha +1$, the inductive premise yields $h \in \Hom RMR$ such that $\pi_{S_{\alpha +1} + J}h = \rho f$. 

In order to prove that $f$ factorises through $\pi_J$, it suffices to factorise $\delta = f - \pi_J h$ through $\pi_J$. Notice that $\rho \delta = 0$, so $\delta$ maps into $({S_{\alpha +1}}+J)/J$, whence it suffices to factorise $\delta$ through $\pi_J \restriction S_{\alpha +1}$. Denoting by $\eta$ the canonical isomorphism $({S_{\alpha +1}+J})/J \to S_{\alpha +1}/(S_{\alpha +1} \cap J)$, we infer that it suffices to factorise $\eta \delta$ through the canonical projection $\rho : S_{\alpha +1} \to S_{\alpha +1}/(S_{\alpha +1} \cap J)$. 

Since the module $S_{\alpha +1}/S_\alpha$ is completely reducible, the canonical projection $\pi : S_{\alpha +1}/S_\alpha \to S_{\alpha +1}/(S_{\alpha +1} \cap J)$ splits, i.e., there is $\mu : S_{\alpha +1}/(S_{\alpha +1} \cap J) \to S_{\alpha +1}/S_\alpha$ such that $\pi \mu = 1$.
By (2), $\mu \eta \delta$ factorizes through $\pi_\alpha$, whence $\eta \delta = \pi \mu \eta \delta$ factorizes through $\rho = \pi \pi_\alpha$, q.e.d. 
\end{proof} 

\begin{remark}\label{rema} Note that the implication (1) implies (2) of Theorem \ref{finlen} holds in general, without any restriction on the Loewy length $\sigma$, by Lemma \ref{closureprops}.
\end{remark}

It will be convenient to consider weaker notions of $R$-projectivity. The first one is obtained by weakening Condition (2) of Theorem \ref{finlen} (which holds in general, for any $R$-projective module by Remark \ref{rema}), the second by a condition on the form of the layers:   

\begin{definition}\label{weakRproj} Let $M$ be a module.
{\begin{enumerate}
\item $M$ is \emph{weakly $R$-projective} provided that for each $0 < \alpha \leq \sigma$ and each $\varphi \in \Hom RM{S_{\alpha +1}/S_\alpha}$ such that $\im \varphi$ is finitely generated, there exists $\psi \in \Hom RM{S_{\alpha +1}}$ such that $\varphi = \pi_\alpha \psi$. 
\item Let $\tau \, (\leq \sigma + 1)$ be the Loewy length of $M$. Then $M$ is \emph{layer projective} provided that for each $0 < \alpha < \tau$, the $\alpha$th layer of $M$ is a projective $R/S_\alpha$-module (i.e., the $\alpha$th layer of $M$ is isomorphic to a direct sum of copies of the simple modules $P_{\alpha \beta}$ ($\beta < \lambda_{\alpha}$), see Corollary \ref{factors}). 
\end{enumerate}}
\end{definition}

Since $R$ is von Neumann regular, each projective module is isomorphic to a direct sum of cyclic modules generated by idempotents of $R$, whence \cite[Theorem 3.8(i)]{T3} implies that each projective module is layer projective. This property can be extended to all weakly $R$-projective modules:

\begin{lemma}\label{rough} Let $M$ be a module.
{\begin{enumerate}
\item For each $\alpha \leq \sigma$, $MS_{\alpha + 1}/MS_\alpha$ is isomorphic to a direct sum of $\alpha$th layer simple modules of $R$.  
\item Assume $M$ is weakly $R$-projective. Then $M$ is layer projective.
\item The class of all weakly $R$-projective modules is closed under submodules. 
\item The class of all layer projective modules is closed under submodules.
\end{enumerate}}
\end{lemma}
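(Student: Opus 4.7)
My plan is to handle the four parts in the order (1), (3), (4), (2), since (2) is the substantive content and uses (1). For part (1), I would fix $m \in M$ and observe that the right-multiplication $r \mapsto mr$ induces an $R$-homomorphism $\bar\phi_m \colon S_{\alpha+1}/S_\alpha \to M/MS_\alpha$ (well-defined because $mS_\alpha \subseteq MS_\alpha$), whose image is $(mS_{\alpha+1} + MS_\alpha)/MS_\alpha$. These images sum over $m \in M$ to $MS_{\alpha+1}/MS_\alpha$; since by Lemma \ref{semiart} $S_{\alpha+1}/S_\alpha$ is semisimple with simple summands exactly the $\alpha$th layer simples $P_{\alpha\beta}$, the same holds for each $\im \bar\phi_m$ and hence for their sum.

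For part (3), given $\varphi \colon N \to S_{\alpha+1}/S_\alpha$ with finitely generated image, $\im \varphi$ is a finite direct sum of $\Sigma$-injective $\alpha$th layer simples, hence injective. I would extend $\varphi$ along $N \hookrightarrow M$ to $\bar\varphi \colon M \to \im\varphi \hookrightarrow S_{\alpha+1}/S_\alpha$, apply weak $R$-projectivity of $M$ to obtain a lift $\bar\psi \colon M \to S_{\alpha+1}$, and restrict to $N$. For part (4), a transfinite induction using the modular law yields that the socle sequence of any $N \leq M$ is $N \cap M_\beta$, so the $\alpha$th layer of $N$ embeds canonically in the $\alpha$th layer of $M$; and any submodule of a direct sum of copies of the $P_{\alpha\beta}$'s is itself such a direct sum, so layer projectivity descends.

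Part (2) is the main content. First, a transfinite induction using part (1) gives $MS_\alpha \subseteq M_\alpha$ for every $\alpha$, so $M/M_\alpha$ is an $R/S_\alpha$-module; by Corollary \ref{factors}, every simple submodule of its socle $M_{\alpha+1}/M_\alpha$ is some $P_{\gamma\delta}$ with $\gamma \geq \alpha$. The task then reduces to ruling out $\gamma > \alpha$ when $0 < \alpha < \tau$. Suppose for contradiction that $P \cong P_{\gamma\delta}$ with $\gamma > \alpha$ sits in $M_{\alpha+1}/M_\alpha$. By $\Sigma$-injectivity of $P$, it splits off in $M/M_\alpha$, yielding $\varphi \colon M \twoheadrightarrow P \cong P_{\gamma\delta} \hookrightarrow S_{\gamma+1}/S_\gamma$ with $\varphi(M_\alpha) = 0$ and finitely generated image. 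Weak $R$-projectivity (legal at index $\gamma \geq 1$) lifts $\varphi$ to $\psi \colon M \to S_{\gamma+1}$. Now pick $m$ in the preimage of $P$---necessarily $m \in M_{\alpha+1}$---with $\varphi(m) \neq 0$. Then $mR$, and hence its surjective image $\psi(m)R$, has Loewy length at most $\alpha+1$. Writing $\psi(m)R = eR$ for an idempotent $e$ by von Neumann regularity, the identity $eR \cap S_\beta = eS_\beta$---which follows from $S_\beta$ being a two-sided ideal---forces the Loewy length of $eR$ to be the least $\beta$ with $e \in S_\beta$. From $eR/eS_\gamma \cong P_{\gamma\delta} \neq 0$ one reads off $e \in S_{\gamma+1} \setminus S_\gamma$, so this Loewy length equals exactly $\gamma+1$; comparing yields $\gamma+1 \leq \alpha+1$, contradicting $\gamma > \alpha$.

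The main obstacle is the Loewy-length comparison in part (2): it pairs the upper bound $\alpha+1$ on the Loewy length of $\psi(m)R$---crucially relying on being able to pick the witness $m$ inside $M_{\alpha+1}$---against the lower bound $\gamma+1$ coming from the identity $eR \cap S_\beta = eS_\beta$ for the idempotent generator $e$. Everything else, including the reduction of (2) to excluding $\gamma > \alpha$, is bookkeeping around Lemma \ref{semiart}, Corollary \ref{factors}, and the fact that Loewy length does not grow under surjections.
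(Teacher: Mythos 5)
Your proposal is correct, and parts (1) and (3) are essentially the paper's arguments (the multiplication maps $S_{\alpha+1}/S_\alpha \to MS_{\alpha+1}/MS_\alpha$, and extension along $N\hookrightarrow M$ using injectivity of the finitely generated image). For part (2) you use the same key mechanism as the paper — split off the offending simple $P\cong P_{\gamma\delta}$ in $M/M_\alpha$ by its injectivity, lift the resulting finite-image map $M\to S_{\gamma+1}/S_\gamma$ by weak $R$-projectivity, and contradict via a Loewy length comparison — but with lighter bookkeeping: the paper proves the stronger equality $M_\beta=MS_\beta$ by induction (which forces an extra idempotent argument, $MS_{\beta+1}=M_{\beta+1}S_{\beta+1}$, to exclude $\beta$th-layer simples from the complement $C$), whereas you only establish the inclusion $MS_\alpha\subseteq M_\alpha$, read off $\gamma\geq\alpha$ from Corollary \ref{factors}, and need only exclude $\gamma>\alpha$; your computation with $\psi(m)R=eR$ and $eR\cap S_\beta=eS_\beta$ is in effect an explicit justification of the paper's one-line step $g(M_{\beta+1})\subseteq S_{\beta+1}\subseteq S_\gamma$. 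Part (4) is where you genuinely diverge: you invoke the general fact that the Loewy (socle) series of a submodule is the trace of the ambient one, $N_\beta=N\cap M_\beta$ (a routine transfinite induction with the modular law, valid for arbitrary modules), so layers of $N$ embed in the corresponding layers of $M$ and layer projectivity descends; the paper instead runs a bespoke induction proving $N_\beta=NS_\beta=N\cap M_\beta$, using part (1) and the idempotent generation of $S_{\beta+1}$. Your route is shorter and isolates exactly what is needed, while the paper's versions of (2) and (4) deliver the additional identities $M_\beta=MS_\beta$ and $N_\beta=NS_\beta$ as reusable by-products; nothing in the statement of the lemma requires them, so your proof is complete as it stands.
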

\begin{proof} (1) For $\alpha \leq \sigma$ and each $x \in M$, there is a homomorphism $f_x : S_{\alpha +1}/S_\alpha \to MS_{\alpha +1}/MS_\alpha$ defined by $f_x(s + S_\alpha ) = xs + MS_\alpha$. Since $\im{f_x}$ is isomorphic to a direct sum of $\alpha$th layer simple modules of $R$, so is the factor-module $MS_{\alpha +1}/MS_\alpha = \sum_{x \in M} \im{f_x}$.

(2) Let $\mathcal M = ( M_\beta \mid \beta \leq \alpha )$ be the socle sequence of $M$ (so $\alpha \leq \sigma + 1$). By part (1), it suffices to prove that that $M_\beta = MS_\beta$ for each $\beta \leq \alpha$. We will prove this by induction on $\beta$. The claim is clear for $\beta = 0$. For the inductive step, we first use part (1) to get $MS_{\beta +1}/MS_\beta = MS_{\beta +1}/M_\beta \subseteq M_{\beta + 1}/M_\beta$. 

Assume $MS_{\beta +1}/M_\beta \neq M_{\beta + 1}/M_\beta$, i.e., $MS_{\beta +1}/M_\beta \oplus C = M_{\beta + 1}/M_\beta$ for a non-zero completely reducible module $C$. Let $P$ be a simple direct summand of $C$, and $M_\beta \subseteq N \subseteq M_{\beta +1}$ be such that $N/M_\beta = P$. Since $P \in \simp R$, $P$ is isomorphic to a $\gamma$th layer simple module of $R$ for some $\gamma \leq \sigma$ by Lemma \ref{semiart}. Since $M_\beta = MS_\beta$, we have $P.S_\beta = 0$, whence $\gamma \geq \beta$. Since $S_{\beta + 1}$ is generated by idempotents,  $MS_{\beta + 1} \subseteq M_{\beta +1}$ implies $MS_{\beta + 1} = M_{\beta + 1}S_{\beta + 1}$. So if $\gamma = \beta$, then $PS_{\beta + 1} = 0$, a contradiction. Thus $\gamma > \beta$.

As $P$ is an injective direct summand in $S_{\gamma+1}/S_\gamma$, there exists a homomorphism $f : M \to S_{\gamma+1}/S_\gamma$ such that $f \restriction M_\beta = 0$ and $\im f = f(N) = P$. The weak $R$-projectivity of $M$ then gives a $g \in \Hom RM{S_{\gamma+1}}$ such that $\pi_\gamma g = f$. As $\gamma > \beta$, $g(M_{\beta +1}) \subseteq S_{\beta + 1} \subseteq S_\gamma$ and $f(N) = \pi_\gamma g(N) = 0$, a contradiction. This proves that $C = 0$ and $MS_{\beta +1} = M_{\beta + 1}$.

If $\beta$ is a limit ordinal, then $MS_\beta = \bigcup_{\delta < \beta} M.S_\delta = \bigcup_{\delta < \beta} M_\delta = M_\beta$. 

(3) This follows from the fact that for each $0 < \alpha \leq \sigma$, all finitely generated submodules of the $\alpha$th layer module $S_{\alpha +1}/S_\alpha$ are injective.  

(4) Let $N$ be a submodule of a layer projective module $M$ and $\mathcal N = ( N_\beta \mid \beta \leq \tau )$ be the socle sequence of $N$ of Loewy length  $\tau \leq \sigma + 1$. By induction on $\beta$, we will prove that $N_\beta = NS_\beta = N \cap M_\beta$, where $M_\beta = MS_\beta$ is the $\beta$th term of the socle sequence of $M$. 

The only non-trivial part of the proof is the inductive step: we first note, as in part (2), that $NS_{\beta +1}/N_\beta \oplus C = N_{\beta + 1}/N_\beta$ for a completely reducible module $C$. Moreover, $N_{\beta + 1}/N_\beta = N_{\beta + 1}/(N \cap M_\beta) = N_{\beta + 1}/(N_{\beta + 1} \cap M_\beta) \cong  (M_\beta + N_{\beta + 1})/M_\beta \subseteq \Soc {M/M_\beta} = M_{\beta + 1}/M_\beta$. 
By step (1), the latter module is a direct sum of $\beta$th layer simple modules of $R$. So $C = CS_{\beta + 1} \subseteq NS_{\beta +1}/N_\beta$, whence $C = 0$. This proves that $N_{\beta + 1} = NS_{\beta + 1}$. Since $S_{\beta + 1}$ is generated by idempotents, $N \cap M_{\alpha +1} = N \cap MS_{\alpha + 1} = NS_{\alpha + 1}$.    
\end{proof}

The class of all weakly $R$-projective modules is larger than the class of all $R$-projective modules in general, because the latter class need not be closed under submodules:

\begin{example}\label{l3} Let $K$ be a field and $\mathcal A$ be a set of cardinality $2^\omega$ consisting of infinite almost disjoint subsets of $\omega$ (e.g., $\mathcal A$ can be taken as the set of all branches of the tree consisting of all finite sequences of elements of $\omega$). W.l.o.g., we will assume that $\bigcup_{A \in \mathcal A} A = \omega$. 
For each $i < \omega$, let $1_i \in K^\omega$ be the idempotent in $K^\omega$ whose $i$th term is $1$, and all the other terms are $0$. Similarly, for each $A \in \mathcal A$, let $1_A$ be the idempotent in $K^\omega$ whose $i$th term is $1$ for $i \in A$, and all the other terms are $0$. Let $R$ be the unital subalgebra of the $K$-algebra $K^\omega$ generated by $\{ 1_i \mid i < \omega \} \cup \{ 1_A \mid A \in \mathcal A \}$. 

The ring $R$ is commutative semiartinian of Loewy length $3$, with $S_1 = \Soc R$ being the $K$-subspace of $R$ generated by $\{ 1_i \mid i < \omega \}$ and $S_2$ the $K$-subspace of $R$ generated by $\{ 1_i \mid i < \omega \} \cup \{ 1_A \mid A \in \mathcal A \}$. Moreover, $R$ is von Neumann regular with primitive factors artinian, with the invariants  $\lambda_0 = \aleph_0$, $\lambda_1 = 2^\omega$, $\lambda_2 = 1$, $n_{0,\beta} = n_{1 \gamma} = n_{2,0} = 1$ and $K_{0,\beta} = K_{1,\gamma} = K_{2,0} = K$ for all $\beta < \lambda_0$ and $\gamma < \lambda_1$ (see Lemma \ref{semiart}). 

In order to prove that $S_2$ is not $R$-projective, we have to show that condition (2) of Lemma \ref{finlen} fails for $\alpha = 1$. Let $\bar{E} = \Hom R{S_2/S_1}{S_2/S_1}$. Since $S_2/S_1$ is a direct sum of $2^\omega$ pairwise non-isomorphic simple modules, all with the endomorphism ring isomorphic to $K$, there is a $K$-algebra isomorphism $\bar{E} \overset{\phi}\cong K^{2^\omega}$. Further, observe that since $\Hom R{S_1}{S_2/S_1} = 0$, there is a $K$-isomorphism $\Hom R{S_2}{S_2/S_1} \overset{\psi}\cong \bar{E}$ given by $\psi (f) = \bar{f}$, where $f = \bar{f} \pi_1$.  Further, $\Hom R{S_2/S_1}{S_2} = 0$, so the restriction map $g \mapsto g \restriction S_1$ is a monomorphism of the $K$-algebra $E_2 = \Hom R{S_2}{S_2}$ into the $K$-algebra $E_1 = \Hom R{S_1}{S_1}$. Since the elements of $E_1$ are uniquely determined by their values at the idempotents $\{ 1_i \mid i < \omega \}$, $E_1$ is isomorphic to $K^{\omega}$. It follows that for each $g \in E_2$ there is $(k_i)_{i < \omega} \in K^\omega$, such that for each $A \in \mathcal A$ there exists $i_A < \omega$, such that almost all non-zero components of $g(1_A) \in 1_A R \subseteq K^\omega$ equal $k_{i_A}$. So there exists $i < \omega$ such that all but countably many components of $\phi \psi \pi_1 g \in K^{2^\omega}$ equal $k_i$. In particular, $\{ \phi \psi \pi_1 g \mid g \in E_2 \}$ is a proper subset of $K^{2^\omega}$, proving that $S_2$ is not $R$-projective.

Since $S_2 \subseteq R$, we infer that $R$-projective modules are not closed under submodules, and also that $R$ is not hereditary. However, $S_2$ is weakly $R$-projective by Lemma \ref{rough}(3).
\end{example}       
 
Next we concentrate on countably generated modules:

\begin{lemma}\label{countable} Assume $\sigma \geq 1$ is finite. Let $M$ be a weakly $R$-projective module. Then each countably generated submodule of $M$ is projective. 

In particular, a countably generated module is weakly $R$-projective, iff it is projective.
 
\end{lemma}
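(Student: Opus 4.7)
The plan is to reduce, via Lemma \ref{rough}(3), to showing that any countably generated weakly $R$-projective module $M$ is itself projective. Fix a countable generating set $\{m_n : n < \omega\}$ of $M$ and set $F_n = \sum_{k \leq n} m_k R$, obtaining an ascending chain of finitely generated submodules with union $M$. By Lemma \ref{rough}(3) each $F_n$ is again weakly $R$-projective, and since $F_n$ is finitely generated, every homomorphism from $F_n$ into $S_{\alpha+1}/S_\alpha$ has finitely generated image. Hence the image restriction in Definition \ref{weakRproj}(1) is vacuous, and $F_n$ satisfies Condition~(2) of Theorem \ref{finlen}. Applying that theorem (available because $\sigma$ is finite) and then Lemma \ref{closureprops} yields that $F_n$ is projective.

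The crucial step is to show that $F_n$ is a direct summand of $F_{n+1}$. I would establish the following general fact about von Neumann regular rings: every finitely generated submodule $N$ of a projective module $P$ is a direct summand of $P$. First reduce to the case $P = R^n$ by observing that a finitely generated $N$ lies in a finite partial sum of $P = \bigoplus_{i \in I} e_i R$, which is itself a direct summand of $P$. The case $P = R^n$ is handled by induction on $n$: projecting $N$ onto the first factor of $R^n = R \oplus R^{n-1}$ yields $\pi(N) = eR$ for an idempotent $e$ (since finitely generated right ideals of a von Neumann regular ring are idempotent-generated); the short exact sequence $0 \to N \cap R^{n-1} \to N \to eR \to 0$ splits because $eR$ is projective, so $N \cap R^{n-1}$ is finitely generated and, by induction, a direct summand of $R^{n-1}$. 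An explicit complement of $N$ in $R^n$ is then assembled from the complements of $eR$ in $R$ and of $N \cap R^{n-1}$ in $R^{n-1}$.

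With this in hand, pick for each $n$ a complement $C_{n+1}$ with $F_{n+1} = F_n \oplus C_{n+1}$. Inductively $F_n = F_0 \oplus \bigoplus_{k=1}^n C_k$, and passing to the union yields
\[ M = F_0 \oplus \bigoplus_{n \geq 1} C_n, \]
a direct sum of projectives (each $C_n$ is a direct summand of the projective $F_n$), hence $M$ is projective. The "in particular" clause is then immediate: projective implies $R$-projective by Lemma \ref{closureprops} (hence weakly $R$-projective), while the converse is the main statement applied with $M$ taken as a submodule of itself.

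The main obstacle is the structural lemma about von Neumann regular rings invoked in the middle step. It is standard, but the inductive construction of the complement of a finitely generated submodule in $R^n$ (and the subsequent transfer to arbitrary projectives) requires careful bookkeeping across the splitting $R^n = R \oplus R^{n-1}$, and it is precisely this fact that allows the countable generation of $M$ to be exploited via a telescoping direct sum.
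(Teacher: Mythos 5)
Your proposal is correct and follows essentially the same route as the paper: finitely generated submodules are weakly $R$-projective by Lemma \ref{rough}(3), hence $R$-projective via Theorem \ref{finlen} (the image condition being vacuous for finitely generated modules), hence projective by Lemma \ref{closureprops}, and the countably generated module is then a telescoping direct sum coming from the fact that a finitely generated submodule of a projective module over a von Neumann regular ring is a direct summand. The only difference is that you prove this last splitting fact by hand, whereas the paper simply cites it as \cite[Theorem 1.11]{G}; your sketch of it is the standard argument and is sound.
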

\begin{proof}  Let $F$ be a finitely generated submodule of $M$. By Lemma \ref{rough}(3), $F$ is weakly $R$-projective. Since $\sigma$ is finite, the weak $R$-projectivity and $R$-projectivity coincide for each finitely generated module by Theorem \ref{finlen}. So $F$ is $R$-projective, and hence projective by Lemma \ref{closureprops}. 

Let $C$ be a countably generated submodule of $M$. By the above, all finitely generated submodules of $C$ are projective, so $C$ is the union of a chain $( F_i \mid i < \omega )$ of finitely generated projective modules. By \cite[Theorem 1.11]{G}, for each $i < \omega$, $F_{i+1} = F_i \oplus P_i$ for a projective module $P_i$, whence $C = F_0 \oplus \bigoplus_{i < \omega} P_i$ is projective. 
\end{proof}
 
The following lemma gives a sufficient condition on $R$ to be hereditary in terms of sizes of the intermediate layers of $R$ (i.e., the $\alpha$th layers of $R$ for $\alpha$ different from $0$ and $\sigma$):  

\begin{lemma}\label{hered} Assume that $R$ has countable Loewy length, and that the module $S_{\alpha+1}/S_\alpha$ is countably generated for each $0 < \alpha < \sigma$. Then $R$ is (left and right) hereditary.
\end{lemma}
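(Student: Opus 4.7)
By the symmetry part of Lemma \ref{semiart}, $R$ has coinciding left and right socle sequences, so it suffices to prove $R$ is right hereditary, i.e., that every right ideal $I$ of $R$ is projective. Since $R$ is von Neumann regular, this is equivalent to writing $I$ as $\bigoplus_\gamma f_\gamma R$ for a family of orthogonal idempotents $f_\gamma \in R$. The plan is to prove, by transfinite induction on $\alpha \le \sigma + 1$, the statement $(\ast)_\alpha$: every right $R$-submodule of $S_\alpha$ is a direct sum of principal right ideals $e_\gamma R$ with orthogonal idempotents $e_\gamma \in R$. Applying $(\ast)_{\sigma+1}$ then yields the lemma.

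The base case $\alpha = 1$ is immediate: by Lemma \ref{semiart}, $\Soc R = \bigoplus_{\beta < \lambda_0} H_{0\beta}$ with each $H_{0\beta}$ a finitely generated idempotent-generated homogeneous component, and every submodule of the semisimple $S_1$ is a direct sum of simple projective modules $eR$ for primitive idempotents $e \in R$. For the successor step $1 \le \alpha < \sigma + 1$: set $I_\alpha := I \cap S_\alpha$, so $(\ast)_\alpha$ supplies a decomposition $I_\alpha = \bigoplus_{\gamma \in \Gamma} e_\gamma R$ with orthogonal $e_\gamma \in S_\alpha$. The quotient $I/I_\alpha$ embeds as a direct summand of the semisimple $R/S_\alpha$-module $S_{\alpha+1}/S_\alpha$, which is countably generated by hypothesis for $\alpha < \sigma$ (and finitely generated for $\alpha = \sigma$); hence $I/I_\alpha = \bigoplus_{n<\omega} \bar f_n (R/S_\alpha)$ with $(\bar f_n)$ orthogonal idempotents in $R/S_\alpha$. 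Each $\bar f_n$ is then lifted to an idempotent $f_n \in I$ by picking a preimage $a_n \in I$ of $\bar f_n$ (available since $I/I_\alpha = (I+S_\alpha)/S_\alpha$) and setting $f_n := a_n r_n$ for a quasi-inverse $r_n$ of $a_n$, yielding $f_n \in a_n R \subseteq I$. A subsequent $\omega$-step simultaneous orthogonalization based on the Pierce decomposition, available in the exchange ring $R$, refines $(f_n)$ to a pairwise orthogonal family whose images in $R/S_\alpha$ still generate the same summands of $I/I_\alpha$; since each $e_\gamma \in S_\alpha$ vanishes in $R/S_\alpha$, a further refinement makes the combined family $(e_\gamma)_{\gamma \in \Gamma} \cup (f_n)_{n<\omega}$ orthogonal, decomposing $I$ as $\bigoplus_\gamma e_\gamma R \oplus \bigoplus_n f_n R$ and establishing $(\ast)_{\alpha+1}$.

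For a countable limit ordinal $\alpha$, I fix a cofinal $\omega$-sequence $\alpha_k \nearrow \alpha$ and apply the successor step recursively, extending at each stage $k$ the orthogonal family for $I \cap S_{\alpha_k}$ by the new idempotents decomposing $(I \cap S_{\alpha_{k+1}})/(I \cap S_{\alpha_k})$; the union of these families gives the decomposition for $I \cap S_\alpha = \bigcup_k (I \cap S_{\alpha_k})$. The hard part is the simultaneous idempotent lifting-and-orthogonalization at the successor step: VNR rings are exchange rings and therefore admit idempotent liftings modulo any ideal and pairwise orthogonalization of countable families, but ensuring that the newly-lifted family $(f_n)$ has its principal ideals disjoint from the potentially large pre-existing family $\{e_\gamma R\}_{\gamma \in \Gamma}$ relies crucially on the countability of $(f_n)$, which is guaranteed precisely by the countable-generation hypothesis on the intermediate layer $S_{\alpha+1}/S_\alpha$. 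In Example \ref{l3}, where exactly this hypothesis fails ($\lambda_1 = 2^\omega$), $R$ is correspondingly not hereditary, which confirms that the countable-generation assumption cannot be dropped.
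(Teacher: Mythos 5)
Your bottom-up strategy --- building the decomposition of $I$ along the socle sequence so that $I\cap S_\alpha$ is split off as a direct summand at every step --- cannot work, and the failure is precisely at the point you flag as ``the hard part''. The successor step asserts $I = (I\cap S_\alpha)\oplus\bigl(\bigoplus_{n<\omega}f_nR\bigr)$, i.e.\ that the short exact sequence $0\to I\cap S_\alpha\to I\to I/(I\cap S_\alpha)\to 0$ splits. But $I/(I\cap S_\alpha)$ embeds into the completely reducible module $S_{\alpha+1}/S_\alpha$, hence is a direct sum of $\alpha$th layer simple modules, and for $\alpha\geq 1$ these are \emph{not} projective as $R$-modules (Lemma \ref{semiart}); so the sequence has no reason to split, and in general it does not. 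The simplest counterexample occurs at the very last step of your induction, with $I=R$ and $\alpha=\sigma$: if $R=S_\sigma\oplus C$, then $C\cong R/S_\sigma$ would be a projective completely reducible module all of whose simple constituents are $\sigma$th layer simples, which is impossible. (Concretely, for $R=ECS(K)$ the socle is not a direct summand of $R$, since $R/\Soc R\cong K$ is the non-projective simple.) Consequently no orthogonalization of the lifted family $(f_n)$ against $(e_\gamma)_{\gamma\in\Gamma}$ can yield the claimed decomposition while keeping $\bigoplus_\gamma e_\gamma R=I\cap S_\alpha$: the obstruction is that $f_nR\cap S_\alpha=f_nS_\alpha$ is typically non-zero and cannot be made zero by any choice of lifts. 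Your closing appeal to Example \ref{l3} is therefore misleading --- the issue is not the uncountability of an intermediate layer; your successor step already breaks for every ring of Loewy length $2$.

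The paper's proof runs in the opposite direction. It first notes that $I/(I\cap S_1)$ is countably generated, because the countably many layers of $I$ \emph{above} the socle each embed into a countably (or finitely) generated completely reducible layer of $R$. It then picks a countably generated subideal $\bigoplus_{i<\omega}e_iR\subseteq I$ with pairwise orthogonal idempotents $e_i$ (the standard argument for countably generated right ideals of a von Neumann regular ring) such that $(\bigoplus_ie_iR)+S_1=I+S_1$. The quotient $(I+S_1)/\bigoplus_ie_iR$ is isomorphic to a complement of $(\bigoplus_ie_iR)\cap S_1$ inside $S_1=\Soc R$, which \emph{is} completely reducible and projective (its simples are $0$th layer simples), so the inclusion $\bigoplus_ie_iR\subseteq I+S_1$ splits, and the modular law gives $I=(\bigoplus_ie_iR)\oplus(I\cap Q)$ with $I\cap Q$ a direct summand of a submodule of $S_1$. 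In other words, one must split off the part of $I$ lying \emph{above} the socle first and let the remainder fall into the projective socle --- the reverse of your filtration-compatible splitting, and the only direction in which the splitting actually exists.
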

\begin{proof} Let $I$ be a right ideal of $R$. Then $I = \bigcup_{\alpha \leq \sigma + 1} (I \cap S_\alpha )$, and the module $(I \cap S_{\alpha +1})/(I \cap S_\alpha)$ is isomorphic to a submodule of the countably generated completely reducible module $S_{\alpha+1}/S_\alpha$ for each $0 < \alpha \leq \sigma$ (for $\alpha = \sigma$, $S_{\alpha+1}/S_\alpha = R/S_\sigma$ is even finitely generated). So the module $I/(I \cap S_1) \cong (I + S_1)/S_1$ is countably generated. 

Since $R$ is von Neumann regular, there exists a countable set of pairwise orthogonal idempotents $\{ e_i \mid i < \omega \}$ in $I$ such that $(\bigoplus_{i < \omega}e_iR) + S_1 = I + S_1$. Moreover,  $((\bigoplus_{i < \omega}e_iR) + S_1)/(\bigoplus_{i < \omega}e_iR ) \cong S_1/ ((\bigoplus_{i < \omega}e_iR) \cap S_1) \cong P$ for a completely reducible and projective module $P \subseteq S_1 = \Soc R$. So $I + S_1 = (\bigoplus_{i < \omega}e_iR) \oplus Q$ where $Q \cong P$, and $I = (\bigoplus_{i < \omega}e_iR) \oplus (I \cap Q)$ is projective, because $I \cap Q$ is a direct summand in $Q$. Similarly, each left ideal of $R$ is projective.
\end{proof}

Next, we record a consequence of the results above for the case of Loewy length $2$, i.e., for $\sigma = 1$ (which includes the rings $R = ECS(K)$ from Lemma \ref{jf}, for example): 

\begin{corollary}\label{two} Assume that $R$ has Loewy length $2$. Then $R$ is hereditary, and a module $M$ is weakly $R$-projective, iff $M$ is $R$-projective, iff each homomorphism from $M$ to $R/\Soc R$ factorises through the canonical projection $\pi : R \to R/\Soc R$. In particular, the class of all $R$-projective modules is closed under submodules. However, there exist layer projective modules that are not weakly $R$-projective.
\end{corollary}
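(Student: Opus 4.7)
The plan is to establish the four claims in turn. Hereditariness of $R$ is immediate from Lemma \ref{hered}, since the Loewy length $\sigma + 1 = 2$ is countable and the hypothesis on intermediate layers $S_{\alpha+1}/S_\alpha$ for $0 < \alpha < \sigma$ is vacuously satisfied.

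For the three-way equivalence, Theorem \ref{finlen} specialized to $\sigma = 1$ (only $\alpha = 1$ being relevant) shows that $M$ is $R$-projective iff every $\varphi \in \Hom{R}{M}{R/\Soc R}$ factorises through $\pi = \pi_1$; and every $R$-projective module is a fortiori weakly $R$-projective. The main obstacle is the converse: weak $R$-projectivity implies the factorisation property. The key ingredient from Lemma \ref{semiart} is that $\lambda_1$ is finite (since $\lambda_\alpha$ is infinite iff $\alpha < \sigma$); hence $R/\Soc R = \bigoplus_{j < \lambda_1} H_j$ is a \emph{finite} direct sum of homogeneous components, each $H_j \cong P_{1,j}^{n_{1,j}}$ of finite composition length. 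Given $f \in \Hom{R}{M}{R/\Soc R}$, the components $f_j := p_j f$ (with $p_j : R/\Soc R \to H_j$ the canonical projection) have images inside the finitely generated modules $H_j$. Weak $R$-projectivity then supplies $\psi_j : M \to R$ with $\pi \psi_j = f_j$, and $\psi := \sum_{j < \lambda_1} \psi_j$ is a well-defined finite sum satisfying $\pi \psi = f$. Closure of the class of $R$-projective modules under submodules now follows at once from Lemma \ref{rough}(3) via this equivalence.

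Finally, to exhibit a layer projective module that is not weakly $R$-projective, note that for $\sigma = 1$ every module is automatically layer projective: any $M$ has Loewy length at most $2$, and the only non-trivial layer-projectivity condition concerns the top layer $M/\Soc M$, which is projective over the semisimple ring $R/\Soc R$. Take $M := P_{1,0}$, any first-layer simple. The inclusion $\iota : P_{1,0} \hookrightarrow R/\Soc R$ (as a summand of $H_0$) has simple, hence finitely generated, image. Any lift $\psi : P_{1,0} \to R$ of $\iota$ would be injective (non-zero out of a simple), embedding $P_{1,0}$ as a simple submodule of $R$ and hence into $\Soc R$, forcing $P_{1,0} \cong P_{0,\beta}$ for some $\beta < \lambda_0$, contradicting the classification of simples in Lemma \ref{semiart}.
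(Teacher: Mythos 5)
Your proof is correct. The first three claims (hereditariness via Lemma \ref{hered}, the equivalence via Theorem \ref{finlen} plus the fact that every map into the finite-length module $R/\Soc R$ has finitely generated image, and closure under submodules via Lemma \ref{rough}(3)) follow the paper's route; your decomposition of $R/\Soc R$ into the finitely many homogeneous components $H_j$ is a harmless elaboration of the paper's one-line remark that $R/\Soc R$ is finitely generated and completely reducible. Where you genuinely diverge is the final counterexample. The paper lifts a complete set of orthogonal idempotents from $R/S_1$ to $E = \{e_i\}$, splits $\Soc{e_iR}$ as $A_i \oplus B_i$ with both summands infinitely generated, and shows the cyclic length-two module $M_i = e_iR/A_i$ is layer projective but not weakly $R$-projective, the latter via Lemma \ref{countable} (a weakly $R$-projective cyclic module would be projective, forcing $A_i$ to be finitely generated). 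You instead observe that over a ring of Loewy length $2$ \emph{every} module is layer projective --- the only layer the definition inspects is the top one, which is a module over the semisimple ring $R/\Soc R$ --- and then take the non-projective simple $P_{1,0}$, whose inclusion into $R/\Soc R$ visibly cannot lift through $\pi$ since any nonzero map $P_{1,0} \to R$ would land in $\Soc R$ and hence compose with $\pi$ to zero. This is shorter and works; the one step you should make explicit is that the top layer $M/\Soc M$ really is an $R/\Soc R$-module, i.e.\ that $M\Soc R \subseteq \Soc M$, which follows from Lemma \ref{rough}(1) (or directly, since $M\Soc R$ is a sum of images of simple right ideals and hence completely reducible). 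The paper's more elaborate example does have the side benefit of exhibiting a non-completely-reducible, cyclic witness, but nothing in the statement requires that.
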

\begin{proof} $R$ is hereditary by Lemma \ref{hered} (since there are no intermediate layers of $R$ for $\sigma = 1$). The characterization of $R$-projective modules follows from Theorem \ref{finlen} (for $n = 1$). Finally, since $R/\Soc R$ is a finitely generated and completely reducible module, it is injective, and the first two claims follow. 

For the last claim, let $F = \{ f_i \mid i < n \}$ be a complete set of orthogonal idempotents in $\bar{R} = R/S_1$, such that $\bar{R} = \bigoplus_{i \leq n} f_i \bar{R}$ and $f_i \bar{R}$ is a simple $\bar{R}$-module for each $i < n$. By \cite[Proposition 2.18]{G}, $F$ lifts in a complete set of orthogonal idempotents $E = \{ e_i \mid i < n \}$ of $R$. In particular, $\Soc{R} = \bigoplus_{i < n} \Soc{e_iR}$ and $R/S_1 = \bigoplus_{i < n} e_iR/\Soc{e_iR}$. For each $i < n$, $e_iR/\Soc{e_iR}$ is a simple non-projective module, whence $\Soc{e_iR} = A_i \oplus B_i$ for some infinitely generated projective completely reducible modules $A_i$ and $B_i$. Let $M_i = e_iR/A_i$. 

We will prove that $M_i$ is layer projective, but not weakly $R$-projective. Clearly, $B_i \subseteq \Soc{M_i}$. Since $B_i$ is not finitely generated, the short exact sequence $0 \to B_i = (A_i \oplus B_i)/A_i \to (e_iR)/A_i \to e_iR/\Soc{e_iR} \to 0$ does not split. Hence $B_i = \Soc{M_i}$, and $M_i$ is layer projective (of length $2$). However, $M_i$ is not weakly $R$-projective, since otherwise $M_i$ is projective by Lemma \ref{countable}, whence $A_i$ is finitely generated, a contradiction.   
\end{proof}  

We finish by summarizing relations between the properties of modules studied in this section: 

\medskip
\noindent projectivity $\implies$ $R$-projectivity $\implies$ weak $R$-projectivity $\implies$ layer projectivity.
\smallskip

By Lemma \ref{sup}, it is consistent with ZFC that the first implication cannot be reversed for any non-right perfect ring $R$. The second implication is actually an equivalence in case $R$ has Loewy length $2$ by Corollary \ref{two}, but it cannot be reversed for rings of Loewy length $3$ in general, by Example \ref{l3}. The third implication cannot even be reversed for any ring of Loewy length $2$ by Corollary \ref{two}. 

In the following section, we will prove that it is consistent with ZFC that the first two implications are equivalences whenever $R$ is small (see Definition \ref{smallR} below).  

\section{Jensen-functions and the projectivity of weakly $R$-projective modules}\label{Faith}

In order to proceed, it will be convenient to characterize weak $R$-projectivity in a more succinct way. First, we need to extend Notation \ref{N1}: 

\begin{notation}\label{N2}
For each $0 < \alpha \leq \sigma$, let $\mathcal F _\alpha$ be the set of all finite subsets of $\lambda_\alpha$. Note that in the setting of Lemma \ref{hered}, $\mathcal F_\alpha$ is countable. For each $F \in \mathcal F _\alpha$, let $N_{\alpha, F}$ be the submodule of $S_{\alpha + 1}$ containing $S_\alpha$, such that $N_{\alpha, F}/S_\alpha \cong \bigoplus_{\beta \in F}M_{n_{\alpha\beta}}(K_{\alpha\beta})$. In other words, $N_{\alpha, F}/S_\alpha$ is the direct sum of those $\beta$th homogenous components of $S_{\alpha + 1}/S_\alpha$ for which $\beta \in F$.  

Note that for each $0 < \alpha \leq \sigma$, each finitely generated submodule of $S_{\alpha + 1}/S_\alpha$ is contained in $N_{\alpha, F}/S_\alpha$ for some $F \in \mathcal F _\alpha$. Let $B = \prod_{0 < \alpha \leq \sigma, F \in \mathcal F _\alpha}N_{\alpha,F}$, $N = \prod_{0 < \alpha \leq \sigma, F \in \mathcal F _\alpha}N_{\alpha , F}/S_\alpha$, and $\pi = \prod_{0 < \alpha \leq \sigma, F \in \mathcal F _\alpha} \pi_\alpha \restriction N_{\alpha,F}$. Notice that $\pi \in \Hom RBN$, and $N$ is an injective module.  
\end{notation}

\begin{lemma}\label{weakproj} A module $M$ is weakly $R$-projective, iff for each $g \in \Hom RMN$ there exists $f \in \Hom RMB$ such that $g = \pi f$.
\end{lemma}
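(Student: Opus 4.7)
The plan is to unpack the product on both sides via the universal property, reducing the stated condition to a family of single-component lifting problems, and then to match these with the definition of weak $R$-projectivity.

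First I would use the canonical isomorphisms
\[
\Hom RMB \cong \prod_{0 < \alpha \leq \sigma,\, F \in \mathcal F_\alpha}\Hom RM{N_{\alpha,F}}, \qquad \Hom RMN \cong \prod_{0 < \alpha \leq \sigma,\, F \in \mathcal F_\alpha}\Hom RM{N_{\alpha,F}/S_\alpha},
\]
to write $g = (g_{\alpha,F})$ and $f = (f_{\alpha,F})$. Since $\pi$ is the product of the restricted projections $\pi_{\alpha,F} := \pi_\alpha \restriction N_{\alpha,F}$, the equation $g = \pi f$ is equivalent to $g_{\alpha,F} = \pi_{\alpha,F} f_{\alpha,F}$ holding for every $(\alpha,F)$. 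Hence the right-hand condition of the lemma is equivalent to the following pointwise lifting property: for each $0 < \alpha \leq \sigma$, each $F \in \mathcal F_\alpha$, and each $h \in \Hom RM{N_{\alpha,F}/S_\alpha}$, there exists $h' \in \Hom RM{N_{\alpha,F}}$ with $\pi_{\alpha,F} h' = h$.

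For the ``if'' direction, I would assume this pointwise lifting and take an arbitrary $\varphi \in \Hom RM{S_{\alpha+1}/S_\alpha}$ with finitely generated image. As observed in Notation \ref{N2}, any finitely generated submodule of $S_{\alpha+1}/S_\alpha$ is contained in $N_{\alpha,F}/S_\alpha$ for some $F \in \mathcal F_\alpha$, so $\varphi$ factors through the inclusion $N_{\alpha,F}/S_\alpha \hookrightarrow S_{\alpha+1}/S_\alpha$. Lifting the corresponding $h$ to $h' \colon M \to N_{\alpha,F} \subseteq S_{\alpha+1}$ gives $\psi = h'$ with $\pi_\alpha \psi = \varphi$, establishing weak $R$-projectivity.

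For the converse, assume $M$ is weakly $R$-projective and let $h \in \Hom RM{N_{\alpha,F}/S_\alpha}$. Composing with the inclusion into $S_{\alpha+1}/S_\alpha$ yields $\varphi$ with finitely generated image (since $N_{\alpha,F}/S_\alpha$ itself is finitely generated). Weak $R$-projectivity supplies $\psi \in \Hom RM{S_{\alpha+1}}$ with $\pi_\alpha \psi = \varphi$. The one small point to check, which is the main (minor) obstacle, is that $\psi$ actually lands in $N_{\alpha,F}$: for each $m \in M$, $\psi(m) + S_\alpha = \varphi(m) \in N_{\alpha,F}/S_\alpha$, and since $S_\alpha \subseteq N_{\alpha,F}$ this forces $\psi(m) \in N_{\alpha,F}$. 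Setting $h' = \psi$ and assembling across all indices $(\alpha, F)$ produces the required $f$ from the given $g$, completing the proof.
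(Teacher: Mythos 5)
Your proof is correct and is essentially the paper's own argument, just written out in full: the paper's one-line proof likewise identifies maps into $N$ with families of maps into the modules $N_{\alpha,F}/S_\alpha$ and uses that the homomorphisms $M \to S_{\alpha+1}/S_\alpha$ with finitely generated image are exactly those whose image lies in some $N_{\alpha,F}/S_\alpha$. The only point worth making explicit in your converse direction is that the image of an arbitrary $h\colon M\to N_{\alpha,F}/S_\alpha$ is finitely generated because $N_{\alpha,F}/S_\alpha$ is completely reducible of finite length (a submodule of a finitely generated module need not be finitely generated in general), but this is immediate from Lemma \ref{semiart}.
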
 
\begin{proof} This follows directly from Definition \ref{weakRproj}(1), since the homomorphisms $g \in \Hom RMN$ encode in their projections all homomorphisms $h \in \Hom RM{S_{\alpha +1}/S_\alpha}$ ($0 < \alpha \leq \sigma$) with image contained in $N_{\alpha F}/S_\alpha$ ($F \in \mathcal F _\alpha$), i.e., with a finitely generated image.    
\end{proof}
           
Our main result will concern the class of all small rings, defined as follows (cf.\ Notation \ref{N1}):								
								
\begin{definition}\label{smallR} The ring $R$ is \emph{small} provided that $\card{R} \leq 2^\omega$, $R$ has finite Loewy length, and for each $0 < \alpha < \sigma$, the module $S_{\alpha+1}/S_\alpha$ is countably generated.
\end{definition}

Note that if $\card{R} \leq 2^\omega$ and $R$ has Loevy length $2$, then $R$ is small, while Example \ref{l3} shows that rings with $\card{R} \leq 2^\omega$, but of Loewy length $3$, need not be small. By Lemma \ref{hered}, each small ring is hereditary.

\medskip
We are going to show that $\diamondsuit$ implies that weak $R$-projectivity, and hence $R$-projectivity, coincides with projectivity for all small rings. Our proof is a generalization of the proof of \cite[Theorem 3.3]{T4}:   

\begin{theorem}\label{consistency} Assume $\diamondsuit$. Let $R$ be small and $M \in \rmod R$. Then $M$ is weakly $R$-projective, iff $M$ is $R$-projective, iff $M$ is projective. In particular, DBC holds in $\rmod R$. 
\end{theorem}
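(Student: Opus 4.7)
The non-trivial direction is weakly $R$-projective $\Rightarrow$ projective, since the other implications are contained in Lemma \ref{closureprops} and the definitions. The plan is a proof by contradiction, using $\diamondsuit$ to build a homomorphism $g : M \to N$ (with $N$, $B$, $\pi$ as in Notation \ref{N2}) that does not factor through $\pi$, contradicting the characterization of weak $R$-projectivity in Lemma \ref{weakproj}.

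First I would set up a minimal counterexample. Assume for contradiction that $M$ is weakly $R$-projective but not projective and take $M$ of smallest possible cardinality $\kappa$. By Lemma \ref{countable}, $\kappa > \aleph_0$; a routine singular-cardinal argument (filtering $M$ by projective submodules of strictly smaller cardinality and applying a Pontryagin-style union criterion, valid because $R$ is hereditary by Lemma \ref{hered}) shows $\kappa$ is regular. Choose a $\kappa$-filtration $(M_\gamma \mid \gamma < \kappa)$ with each $M_\gamma$ of cardinality $<\kappa$, so that $M_\gamma$ is projective by minimality, and arrange that $M_{\gamma+1}/M_\gamma$ is countably generated. Let $E = \{\gamma < \kappa \mid M_{\gamma+1}/M_\gamma \text{ is not projective}\}$. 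Since $M$ is not projective and each $M_\gamma$ is, an Eklof-style argument (available because $R$ is hereditary, so $\mathrm{pd}(M_{\gamma+1}/M_\gamma) \leq 1$ and projectivity is equivalent to $\Ext^1(-,X)=0$) shows that $E$ is stationary in $\kappa$.

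Next, apply $\diamondsuit$ (Lemma \ref{jensen}) with the filtration $\{M_\gamma \mid \gamma<\kappa\}$ of $A=M$ and target set $B$ (note $|B| \leq 2^{\aleph_0} \leq \kappa$ thanks to smallness and finiteness of $\sigma$). This yields Jensen-functions $(c_\gamma : M_\gamma \to B \mid \gamma < \kappa)$ predicting all maps $M \to B$. I would then construct, by transfinite recursion on $\gamma < \kappa$, a coherent chain of homomorphisms $g_\gamma : M_\gamma \to N$ so that at each step $\gamma \in E$ for which $c_\gamma$ happens to be an $R$-homomorphism satisfying $\pi c_\gamma = g_\gamma$, the extension $g_{\gamma+1} : M_{\gamma+1} \to N$ is chosen so that $c_\gamma$ admits no extension to a homomorphism $M_{\gamma+1} \to B$ lifting $g_{\gamma+1}$ along $\pi$. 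Setting $g = \bigcup_{\gamma < \kappa} g_\gamma$, weak $R$-projectivity gives some $f : M \to B$ with $\pi f = g$. By the Jensen property applied to $f$ viewed as a set-map $A\to B$, the stationary set $\{\gamma \in E \mid f \restriction M_\gamma = c_\gamma\}$ meets $E$, producing a $\gamma$ at which $f \restriction M_{\gamma+1}$ is precisely the forbidden extension of $c_\gamma$, a contradiction.

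The main obstacle is the diagonalization step at $\gamma \in E$: given the projective $M_\gamma$, the non-projective countably generated quotient $M_{\gamma+1}/M_\gamma$, the partial choice $g_\gamma$, and the putative lift $c_\gamma$, one must produce an extension $g_{\gamma+1}$ that blocks every homomorphism $M_{\gamma+1} \to B$ extending $c_\gamma$. The key algebraic input is that non-projectivity of $M_{\gamma+1}/M_\gamma$ produces, via the layer-projectivity consequences of Lemma \ref{rough} and the injectivity of finitely generated layer modules $N_{\alpha,F}/S_\alpha$, a nonzero obstruction in some $\Ext^1(M_{\gamma+1}/M_\gamma, S_\alpha)$; this translates into a homomorphism $h : M_{\gamma+1} \to N_{\alpha,F}/S_\alpha$ for a suitable finite $F \in \mathcal F_\alpha$ (finite indexing being available since $M_{\gamma+1}/M_\gamma$ is countably generated and smallness forces countability of $\mathcal F_\alpha$ for $0<\alpha<\sigma$) that vanishes on $M_\gamma$ and does not lift to $N_{\alpha,F}$. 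Adding $h$ (placed in the appropriate coordinate of $N$) to any extension of $g_\gamma$ yields the required $g_{\gamma+1}$. Smallness of $R$ is used precisely here to bound the coding and to ensure enough countability so that $\diamondsuit$ and Lemma \ref{countable} apply at the successor steps; the conclusion that $M$ is even $R$-projective (and DBC holds) is then immediate from Lemma \ref{closureprops}.
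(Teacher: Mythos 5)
Your overall strategy is the one the paper uses: reduce to a regular uncountable number of generators $\kappa$, fix a $\kappa$-filtration with a ``bad'' stationary set $E$ of successor quotients, and run a $\diamondsuit$-diagonalization against all potential factorizations through $\pi : B \to N$ (Notation \ref{N2}, Lemma \ref{weakproj}), using the injectivity of $N$ at the non-predicted steps and the Jensen functions to catch any putative lift $f$. Your minimal-counterexample formulation correctly substitutes for the paper's induction on the number of generators: it is exactly what upgrades ``$M_{\gamma+1}/M_\gamma$ not projective'' to ``not weakly $R$-projective'', which is what the blocking step actually needs to produce a map to $N$ that does not factor through $\pi$.

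There is, however, a genuine gap in the filtration bookkeeping. You define $E=\{\gamma \mid M_{\gamma+1}/M_\gamma \text{ not projective}\}$ for a filtration refined so that each $M_{\gamma+1}/M_\gamma$ is countably generated, and assert that an Eklof-style argument makes $E$ stationary. For such a fine filtration this is not justified: the filtration-invariant object is the set $E'=\{\gamma \mid M_\delta/M_\gamma \text{ not projective for some }\delta>\gamma\}$ (the $\Gamma$-invariant), and the one-step set $E$ can be properly smaller --- for instance contained in the successor ordinals, hence non-stationary --- even when $M$ is not projective, because failure of projectivity of a long quotient $M_\delta/M_\gamma$ need not be witnessed at the very next step; conversely, a club avoiding your $E$ does not give projective consecutive quotients after re-filtration, since the intervals between club points may still meet $E$. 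The fix is the paper's normalization: coarsen the filtration so that whenever some $M_\delta/M_\gamma$ is not weakly $R$-projective, already $M_{\gamma+1}/M_\gamma$ is not; then ``$E$ non-stationary'' genuinely yields projectivity via the club re-filtration, and one shows $E$ must be stationary by deriving the $\diamondsuit$-contradiction from the assumption that it is. Your insistence on countably generated quotients is both incompatible with this coarsening and unnecessary: Lemma \ref{weakproj} (via Definition \ref{weakRproj}(1)) already provides an obstruction with finitely generated image inside a single layer, so the finite set $F\in\mathcal F_\alpha$ is available with no cardinality restriction on $M_{\gamma+1}/M_\gamma$. The remaining sketched steps (the singular case via singular compactness for the submodule-closed class of weakly $R$-projective modules, the bound $\card{B}\le 2^{\aleph_0}=\aleph_1\le\kappa$ under $\diamondsuit$, and the final contradiction at a stage $\gamma$ with $f\restriction M_\gamma=c_\gamma$) match the paper's proof.
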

\begin{proof} We only have to prove that each weakly $R$-projective module $M$ is projective. We do this by induction on the minimal number of generators, $\kappa$, of $M$. For $\kappa \leq \aleph_0$, we can use Lemma \ref{countable}. If $\kappa$ is a singular cardinal, then we employ the fact that the class of all weakly $R$-projective modules is closed under submodules, and apply \cite[Corollary 3.11]{T2}.   

Assume $\kappa$ is a regular uncountable cardinal. Let $A = \{ m_\gamma \mid \gamma < \kappa \}$ be a minimal set of $R$-generators of $M$. For each $\gamma < \kappa$, let $A_\gamma = \{ m_\delta \mid \delta < \gamma \}$. Let $M_\gamma$ be the submodule of $M$ generated by $A_\gamma$. Then $\mathcal M = (M_\gamma \mid \gamma < \kappa )$ is a $\kappa$-filtration of the module $M$. Possibly skipping some terms of $\mathcal M$, we can w.l.o.g.\ assume that $\mathcal M$ has the following property for each $\gamma < \kappa$: if $M_{\delta}/M_\gamma$ is not weakly $R$-projective for some $\gamma < \delta < \kappa$, then also $M_{\gamma + 1}/M_\gamma$ is not weakly $R$-projective. 
Let $E$ be the set of all $\gamma < \kappa$ such that $M_{\gamma + 1}/M_\gamma$ is not weakly $R$-projective.
 
We claim that $E$ is not stationary in $\kappa$. If our claim is true, then there is a club $C$ in $\kappa$ such that $C \cap E = \emptyset$. Let $u : \kappa \to \kappa$ be a strictly increasing continuous function whose image is $C$. For each $\gamma < \kappa$, let $N_\gamma = M_{u(\gamma )}$. Then $(N_\gamma \mid \gamma < \kappa )$ is a $\kappa$-filtration of the module $M$ such that $N_{\gamma + 1}/N_\gamma$ is weakly $R$-projective for all $\gamma < \kappa$. By the inductive premise, $N_{\gamma + 1}/N_{\gamma}$ is projective, hence $N_{\gamma + 1} = N_\gamma \oplus P_\gamma$ for a projective module $P_\gamma$, for each $\alpha < \kappa$. Then $M = N_0 \oplus \bigoplus_{\gamma < \kappa} P_\gamma$ is projective, too.

Assume our claim is not true. Let $B = \prod_{0 < \alpha \leq \sigma, F \in \mathcal F _\alpha}N_{\alpha,F}$ (see Notation \ref{N2}). Since $\card{R} \leq 2^\omega = \aleph_1 \leq \kappa$ by $\diamondsuit$ and by our assumption on $R$, also $\card{B} \leq \aleph_1 \leq \kappa$.  
 
Applying Lemma \ref{jensen} to the setting above, we obtain the Jensen-functions $c_\gamma : A_\gamma \to B$ ($\gamma < \kappa$) such that for each function $c: A \to B$, the set $E(c) = \{ \gamma \in E \mid c_\gamma = c \restriction A_\gamma \}$ is stationary in $\kappa$.

By induction on $\gamma < \kappa$, we will define a sequence $( g_\gamma \mid \gamma < \kappa )$ of homomorphisms, such that $g_\gamma \in \Hom R{M_\gamma}N$ as follows: $g_0 = 0$; if $\gamma < \kappa$, and $g_\gamma$ is defined, we distinguish two cases: 
  
(I) $\gamma \in E$, and there exist $h_{\gamma +1} \in \Hom R{M_{\gamma+1}}N$ and $y_{\gamma +1} \in \Hom R{M_{\gamma+1}}B$, such that $h_{\gamma +1} \restriction M_\gamma = g_\gamma$, $h_{\gamma +1} = \pi y_{\gamma +1}$ and $y_{\gamma +1} \restriction A_{\gamma} = c_\gamma$. In this case we define $g_{\gamma +1} = h_{\gamma +1} + f_{\gamma +1} \rho_{\gamma +1}$, where $\rho_{\gamma +1} : M_{\gamma +1} \to M_{\gamma +1}/M_\gamma$ is the projection and $f_{\gamma +1} \in \Hom R{M_{\gamma+1}/M_\gamma}N$ does not factorise through $\pi$ (such $f_{\gamma +1}$ exists because $\gamma \in E$, so $M_{\gamma+1}/M_\gamma$ is not weakly $R$-projective, by Lemma \ref{weakproj}). Note that $g_{\gamma+1} \restriction M_\gamma = h_{\gamma +1} \restriction M_\gamma = g_\gamma$. 

(II) otherwise. In this case, we extend $g_\gamma$ to $M_{\gamma +1}$ using the injectivity of the module $N$. Thus we obtain $g_{\gamma +1} \in \Hom R{M_{\gamma+1}}N$.
  
If $\gamma < \kappa$ is a limit ordinal, we let $g_\gamma = \bigcup_{\delta < \gamma} g_\delta$, and we define $g = \bigcup_{\gamma < \kappa} g_\gamma$. 

We will prove that $g$ does not factorise through $\pi$. By Lemma \ref{weakproj}, this will contradict the weak $R$-projectivity of $M$, and prove our claim.

Assume there is $f \in \Hom RMB$ such that $g = \pi f$. Let $\gamma \in E(f \restriction A)$. Then $g_{\gamma +1} = \pi (f \restriction M_{\gamma + 1})$ and $f \restriction A_{\gamma} = c_\gamma$. Then $\gamma$ is in case (I) (because $g_{\gamma + 1} \restriction M_\gamma = g_\gamma$, $g_{\gamma +1} = \pi (f \restriction M_{\gamma +1})$, and $(f \restriction M_{\gamma +1}) \restriction A_\gamma = c_\gamma$).  

Let $z_{\gamma +1} = f \restriction M_{\gamma + 1} - y_{\gamma +1}$. Then $z_{\gamma +1} \restriction A_{\gamma} = f \restriction A_{\gamma} - y_{\gamma +1} \restriction A_\gamma = c_\gamma - c_\gamma = 0$. So there exists $x_{\gamma +1} \in \Hom R{M_{\gamma +1}/M_\gamma}B$ such that $z_{\gamma +1} = x_{\gamma +1} \rho_{\gamma +1}$. Moreover, 
$$\pi x_{\gamma +1} \rho_{\gamma +1} = \pi z_{\gamma +1} = \pi f \restriction M_{\gamma + 1} - \pi y_{\gamma +1} = g_{\gamma + 1} - h_{\gamma +1} = f_{\gamma +1} \rho_{\gamma +1}.$$
Since $\rho_{\gamma +1}$ is surjective, we conclude that $\pi x_{\gamma +1} = f_{\gamma +1}$, in contradiction with our choice of the homomorphism $f_{\gamma +1}$.  
\end{proof}           

\begin{corollary}\label{undec} The following assertion is independent of ZFC + GCH: {\lq}If $R$ is a small von Neumann regular semiartinian ring with primitive factors artinian, then all (weakly) $R$-projective modules are projective.{\rq}
\end{corollary}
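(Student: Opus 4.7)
The plan is to establish independence of the quantified assertion by exhibiting two extensions of ZFC + GCH in which it takes opposite truth values, drawing on results already assembled in the preceding sections.

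For the positive side, I would invoke Theorem \ref{consistency} directly. Jensen's theorem that $\diamondsuit$ holds in G\"odel's constructible universe shows that ZFC + $\diamondsuit$ is consistent whenever ZFC is; since $\diamondsuit$ implies GCH, this is in fact a consistent extension of ZFC + GCH. In any such model, Theorem \ref{consistency} asserts precisely that every weakly $R$-projective module over every small semiartinian von Neumann regular ring with primitive factors artinian is projective, so the universally quantified assertion is true there.

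For the negative side, I would exhibit a single small ring $R$ in our class together with a non-projective (weakly) $R$-projective module, inside a suitable extension of ZFC + GCH. The cleanest witness is $R = ECS(K)$ from Lemma \ref{jf}, taking $K$ a field of cardinality $\leq 2^\omega$. This ring has $\card{R} \leq 2^\omega$ and Loewy length $2$, hence is small (as noted right after Definition \ref{smallR}); it lies in our class by the discussion following Lemma \ref{semiart}; and it is not right perfect because it is semiartinian with infinitely generated socle, so not artinian. Applying Lemma \ref{sup} inside ZFC + GCH + SUP then produces an $R$-projective module of projective dimension $1$ that is not projective; this module is in particular weakly $R$-projective, so it witnesses failure of the assertion. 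Consistency of ZFC + GCH + SUP with ZFC was already recorded in the remark preceding Lemma \ref{jf}.

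The substantive content, namely the diamond-based construction in Theorem \ref{consistency} and the SUP-based refutation in Lemma \ref{sup}, has been fully developed above, so I do not expect any genuine technical obstacle. The only bookkeeping point to check is that $ECS(K)$ is a concrete small non-right-perfect ring of the stated type, and this is immediate from the structural dictionary in Lemma \ref{semiart} together with Definition \ref{smallR}. The corollary then follows by combining the two halves.
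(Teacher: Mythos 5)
Your proposal is correct and follows essentially the same route as the paper: $\diamondsuit$ plus Theorem \ref{consistency} for the positive direction, and SUP plus Lemma \ref{sup} applied to a non-right-perfect small ring for the negative direction. The only difference is that you make explicit the witness $R = ECS(K)$ and the GCH-compatibility of both extensions, which the paper leaves implicit (relying on its standing assumption $\sigma \geq 1$ and the earlier remarks on Con(ZFC + GCH + SUP) and Con(ZFC + GCH + $\diamondsuit$)); this is harmless extra care, not a different argument.
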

\begin{proof} Assume SUP. Then the existence of an $R$-projective module $M$ of projective dimension~$1$ follows by Lemma \ref{sup}.

Assume $\diamondsuit$. Then all weakly $R$-projective modules are projective by Theorem \ref{consistency}.
\end{proof}

\begin{remark}\label{best} Though Corollary \ref{undec} covers the case of all rings $R$ with $\card{R} \leq 2^\omega$ and Loewy length $2$, it cannot be extended to all rings $R$ with $\card{R} \leq 2^\omega$ and of Loewy length $3$: Example \ref{l3} shows that such rings may contain weakly $R$-projective modules that are not projective in ZFC.

Also note that by Lemma \ref{rough}(3), if weak $R$-projectivity implies projectivity, then $R$ is necessarily right hereditary. Hence our restriction to countably generated layers in Definition \ref{smallR} (cf.\ Lemma \ref{hered}).   
\end{remark}

\begin{acknowledgment} The author thanks the referee for suggesting several simplifications of the presentation. He also thanks Roger Wiegand for discussions related to the commutative noetherian case (cf.\ Remark \ref{rem0}).
\end{acknowledgment}


\begin{thebibliography}{AIPY}
\bibitem{AIPY}
{H.\ Alhilali, Y.\ Ibrahim, G.\ Puninski, M.\ Yousif},
\textit{When R is a testing module for projectivity?}, 
J. Algebra \textbf{484}(2017), 198-206.
\bibitem{AF}
{F.W.\ Anderson, K.R.\ Fuller}, 
\textit{Rings and Categories of Modules}, 
2nd ed., GTM \textbf{13}, Springer, New York 1992.
\bibitem{B}
{R.\ Baer}, 
\textit{Abelian groups that are direct summands of every
containing abelian group}, Bull. Amer. Math. Soc. \textbf{46}(1940), 800-806.
\bibitem{EM}
{P.C.\ Eklof,  A.H.\ Mekler}, \textit{Almost Free Modules}, 2nd ed., North
Holland Math. Library, Elsevier, Amsterdam 2002.
\bibitem{ES}
{P.C.\ Eklof, S.\ Shelah}, 
\textit{On Whitehead modules}, 
J. Algebra \textbf{142}(1991), 492-510.
\bibitem{F}
{C.\ Faith}, 
\textit{Algebra II.\ Ring Theory}, GMW \textbf{191}, Springer-Verlag, Berlin 1976.
\bibitem{GT}
{R.\ G\"obel, J.\ Trlifaj}, 
\textit{Approximations and Endomorphism Algebras of Modules}, 2nd ed., GEM \textbf{41}, W. de Gruyter, Berlin 2012.
\bibitem{G}
{K.R.\ Goodearl}, 
\textit{Von Neumann Regular Rings}, 2nd ed.,
Krieger Publ.\ Co., Malabar 1991.
\bibitem{H1} 
{R.M.\ Hamsher}, 
\textit{Commutative, noetherian rings over which every module has a maximal submodule}, 
Proc.\ Amer.\ Math.\ Soc. \textbf{17}(1966), 1471-1472.
\bibitem{H2}
{R.M.\ Hamsher}, 
\textit{Commutative rings over which every module has a maximal submodule}, 
Proc.\ Amer.\ Math.\ Soc. \textbf{18}(1967), 1133-1137.
\bibitem{J} 
{R.\ Jensen}, 
\textit{The fine structure of the constructible hierarchy}, 
Ann.\ Math.\ Logic 4(1972), 229-308.
\bibitem{KV} 
{R.D.\ Ketkar, N.\ Vanaja}, \textit{R-projective modules over a semiperfect ring}, Canad.\ Math.\ Bull. \textbf{24}(1981), 365-367.
\bibitem{RTZ} 
{P.\ R{\accent23 u}\v{z}i\v{c}ka, J.\ Trlifaj, J.\ \v{Z}emli\v{c}ka}, \textit{Criteria of steadiness}, Abelian Groups, Module Theory, and Topology, M.Dekker, New York 1998, 359-371.
\bibitem{S} 
{F.\ Sandomierski}, \textit{Relative Injectivity and Projectivity}, PhD thesis, Penn State University, 1964.
\bibitem{T1}
{J.\ Trlifaj}, \textit{Non-perfect rings and a theorem of Eklof and Shelah}, 
Comment. Math. Univ. Carolinae \textbf{32} (1991), 27-32.
\bibitem{T2}
{J.\ Trlifaj}, \textit{Whitehead test modules}, Trans.\ Amer.\ Math.\ Soc.
\textbf{348} (1996), 1521-1554.
\bibitem{T3}
{J.\ Trlifaj},  \textit{Uniform modules, $\Gamma$-invariants and Ziegler spectra of regular rings}, Infinite Abelian Groups and Modules
Trends in Mathematics, Birkh\" auser, Basel 1999, 327-340.
\bibitem{T4}
{J.\ Trlifaj}, \textit{Faith's problem on $R$-projectivity is undecidable},  Proc.\ Amer.\ Math.\ Soc. \textbf{147}(2019), 497-504.
\end{thebibliography}
\end{document}